\numberwithin{equation}{section}
\newcommand{\thmref}[1]{Theorem~\ref{#1}}
\newcommand{\secref}[1]{Section~\ref{#1}}
\newcommand{\appref}[1]{Appendix~\ref{#1}}
\newcommand{\lemref}[1]{Lemma~\ref{#1}}
\newcommand{\propref}[1]{Proposition~\ref{#1}}
\newcommand{\eqnref}[1]{(\ref{#1})}
\newtheorem{theorem}{Theorem}[section]
\newtheorem{lemma}[theorem]{Lemma}
\newtheorem{proposition}[theorem]{Proposition}
\newtheorem{corollary}[theorem]{Corollary}
\theoremstyle{definition}
\newtheorem{definition}[theorem]{Definition}
\theoremstyle{remark}
\newtheorem{remark}[theorem]{Remark}
\newcommand{\qbinom}[2]{\genfrac{[}{]}{0pt}{}{#1}{#2}}
\newtheorem{conjecture}[theorem]{Conjecture}
\newcommand{\C}{\mathbb{C}}
\newcommand{\F}{\mathbb{F}}
\newcommand{\Z}{\mathbb{Z}}
\newcommand{\N}{\mathbb{N}}
\newcommand{\fb}{\mathfrak{b}}
\newcommand{\fg}{\mathfrak{g}}
\newcommand{\fh}{\mathfrak{h}}
\newcommand{\gl}{\mathfrak{gl}}
\newcommand{\fsl}{\mathfrak{sl}}
\newcommand{\CC}{\mathcal{C}}
\newcommand{\U}{{\rm U}}
\newcommand{\End}{{\rm End}}
\begin{document}

\title[Centre of the quantum group]{Explicit generators of the centre of \\ the quantum group}
 \author[Yanmin Dai]{Yanmin Dai}
\address{School of Mathematical Sciences, University of Science and Technology of China, Heifei, 230026, China}
\email{bt2@mail.ustc.edu.cn}

\begin{abstract}
A finite generating set of the centre of any quantum group is obtained, where the generators are given by an explicit formulae. For the slightly generalised version of the quantum group which we work with, we show that this set of generators is algebraically independent, thus the centre is isomorphic to a polynomial algebra. 
\end{abstract}
 
\maketitle
  
{\bf MSC}: {Primary 17B37; Secondary 20G42; 19A22;} 
\tableofcontents
  
\section{Introduction}
  
We construct explicit generators of the centre of any quantum group in this paper.  This involves two separate problems, namely, 
the construction, in terms of explicit formulae, of a finite set of central elements,  and 
the proof that they generate the centre of the quantum group.

Note that even in the classical case of a semi-simple Lie algebra $\fg$, where the algebraic structure of the centre of the universal enveloping algebra $\U(\fg)$ is well understood, it is an important and highly non-trivial problem to construct explicit generators of the centre of $\U(\fg)$, that is, to construct the (high order) Casimir operators.

The explicit generators of the centre of the quantum group to be constructed in this paper are quantum analogues of (higher order) Casimir operators of $\U(\fg)$ arising from ``characteristic identities" \cite{BG}. The operators, both classical and quantum, are particularly useful for explicit computations in representation theory, e.g., for computing Wigner coefficients and developing Racah-Wigner calculus (see, e.g., \cite{G1, WIG}), and for solving Hamiltonian systems in atomic and molecular physics.

We choose to work with a slightly generalised Jimbo version of the quantum group (see \cite{DK,DP,JL} and Definition \ref{def} in particular). Recall that the standard quantum group has among its generators $K_{\alpha_i}^{\pm 1}$ associated with the simple roots $\alpha_i$, thus contains all $K_\beta^{\pm 1}$ for $\beta$ in the root lattice. The generalised quantum group $\U_q(\fg)$ includes all the $K_\mu^{\pm 1}$ for $\mu$ in the weight lattice; see Remark \ref{rem} for more details.

The main results of the paper are summarised in Theorem \ref{mainresult} and Corollary \ref{cor:poly}.
Let  $rk(\fg)$ be the rank of $\fg$. In Theorem \ref{mainresult}, we give a set of $rk(\fg)$ central elements of $\U_q(\fg)$ in terms of the explicit formulae \eqref{eq:key-formula}, and show that they generate the centre of $\U_q(\fg)$.  In Corollary \ref{cor:poly}, we summarise that the set of generators are algebraically independent, thus the centre of the generalised quantum group $\U_q(\fg)$ is isomorphic to a polynomial algebra of $rk(\fg)$ variables. 

We now briefly describe our approach to the results. 

Recall that an infinite family of central elements of $\U_q(\fg)$ were constructed from any given finite dimensional representation $(V, \zeta)$ (where $V$ is a $\U_q(\fg)$-module and $\zeta$ is the corresponding representation)  of the quantum group in \cite{zhang2, zhang1}.  The construction makes use of the universal $L$-operators $L_V, L_V^T\in \End(V)\otimes \U_q(\fg)$ (see \eqref{eq: LvLvT}), the existence of which in the current version of $\U_q(\fg)$ is explained in \secref{sec:D}. The operator $\Gamma_V= L_V^T L_V$ (see \eqnref{eq: defgamma}) commutes with $(\zeta\otimes{\rm id})\Delta(x)$ for all $x\in \U_q(\fg)$, where $\Delta$ is the comultiplication, thus by \thmref{thm:BGZ} (see also \cite[Proposition 1]{zhang2}), the $q$-trace $\CC_V^{(m)}$ of $\Gamma_V^m$ is a central element  of  $\U_q(\fg)$ for each positive integer $m$.  The generating set of the centre of $\U_q(\fg)$ given in Theorem \ref{mainresult} consists of the elements $\CC_V^{(1)}$ for $V$ being the simple $\U_q(\fg)$-modules with fundamental highest weights. 

We prove that the set of central elements given in Theorem \ref{mainresult} generates the centre by 
using a quantum Harish-Chandra isomorphism (see \thmref{thm:HCiso} and \cite[\S 18.3]{DP}) for the generalised quantum group.  In particular, we prove that the images of those central elements  generate the image of the centre of the quantum group under the quantum Harish-Chandra isomorphism. We also show that the images are algebraically independent, thus generate a polynomial algebra, leading to Corollary \ref{cor:poly}.

This quantum Harish-Chandra isomorphism is an analogue of a similar result \cite{Jantzen,T} for the standard quantum groups. It was presented in \cite[\S 18.3]{DP}, where a geometric proof was given following familiar arguments in the theory of semi-simple Lie algebras. (There was also an indication of the result in the introduction section of \cite{Lg}.)  Due to its importance to us, we give an elementary algebraic proof of the result in \appref{app: proof} by adapting \cite[Chapter 6]{Jantzen} to our context.  The algebraic proof works in much the same way for both the standard and generalised quantum groups, however it is not clear to us how the geometric method of \cite[\S 18.3]{DP} would work in the case of the standard quantum groups. 

The authors of \cite{zhang1, zhang2} expected that some finite subset of the central elements $\CC_V^{(n)}$ for all finite dimensional simple modules $V$ and all $n=1, 2, \dots$ generates the centre of $\U_q(\fg)$, but this was not proved before except for $\fg=\gl_\ell$ \cite{Junbo}.  In Conjecture \ref{conj}, we suggest another subset of the elements $\CC_V^{(n)}$ as a generating set of the centre of $\U_q(\fg)$. For $\fg=\gl_\ell$, the conjecture is implied by the results of \cite{Junbo}. 

In a future publication,  we will give a similar treatment of the centres of the standard quantum groups and quantum supergroups \cite{BGZ}. 
We point out that complete sets of generators were constructed for the centres of  the standard quantum groups $\U_q(\fsl_3)$ and $\U_q(\fsl_4)$ \cite{L3, L4}. 

The paper is organized as follows. In \secref{sec: QuanGrp}, we introduce the basics of the generalised Jimbo quantum group, which we still denote by $\U_q(\fg)$, and give the new Harish-Chandra isomorphism for generalised quantum group $\U_q(\fg)$. In \secref{sec:D}, we give an analogue of universal $R$-matrix in finite dimensional representations of $\U_q(\fg)$,  and use them to construct infinite families of central elements of the generalised quantum group.
Finally in \secref{sec: main-result}, we extract from these central elements a generating set of the centre of the generalised quantum group. This is  explained in Theorem \ref{mainresult}. The appendix contains the algebraic proof of the quantum Harish-Chandra isomorphism for $\U_q(\fg)$ (see Theorem \ref{thm:HCiso}).

\section{Basics on quantum groups}

\subsection{A generalisation of the Jimbo quantum group}\label{sec: QuanGrp}

Let $\fg$ be a finite dimensional simple Lie algebra over the field of complex numbers $\C$.  Choose Borel and Cartan subalgebras $\mathfrak{h}\subseteq \fb\subseteq \mathfrak{g}$, and fix a basis $H_1,\dots,H_n$ for the Cartan subalgebra $\fh$, where $n$ is the rank of $\fg$. Denote by $\Phi$ the root system of $\mathfrak{g}$ relative to this choice, and let $\Phi^+$ be the set of positive roots. Let $\Pi=\{\alpha_1,\alpha_2,\cdots,\alpha_n\}$ be the simple root system of $\Phi$. Let $(\ ,\ )$ be the non-degenerate bilinear form on the dual space $\mathfrak{h}^*$ normalised so that the square of the length of short roots is $2$. Denote by $W$ the Weyl group of $\mathfrak{g}$. The Cartan matrix $A=(a_{ij})$ is the $n\times n$ matrix with $a_{ij}=\frac{2(\alpha_i,\alpha_j)}{(\alpha_i,\alpha_i)}$. Moreover, we set $d_{i}=(\alpha_i,\alpha_i)/2$  and let $\alpha_i^{\vee}=d_i^{-1}\alpha_i$ be the simple coroot for $1\leq i\leq n$.

Denote by $\varpi_i, 1\leq i\leq n$ the fundamental weights of $\mathfrak{g}$ such that $\frac{2(\varpi_i, \alpha_j)}{(\alpha_j, \alpha_j)}=\delta_{ij}$ for all $i, j$. We write
\[
 P=\bigoplus_{i=1}^{n} \mathbb{Z}\varpi_i, \quad Q= \bigoplus_{i=1}^n \mathbb{Z}\alpha_i
\]
for the weight and root lattices of $\mathfrak{g}$, respectively.  Clearly $(\lambda, \alpha)\in \Z$ for all $\lambda\in P$ and $\alpha\in\Phi$ for the given  normalisation of the bilinear form.
Let $P^+=\bigoplus_{i=1}^n \N \varpi_i$ be the set of dominant weights, and $Q^{+}=\bigoplus_{i=1}^n \N\alpha_i $ the set of non-negative integer combinations of simple roots.

We denote by $\ell_\fg$ the minimal positive integer such that $\ell_\fg (\lambda, \mu)\in\Z$ for all $\lambda, \mu\in P$.
Let $q^{\frac{1}{\ell_\fg}}$ be an indeterminate, and denote by $\C(q^{\frac{1}{\ell_\fg}})$ the field of rational functions. We will always write
\[
\F=\C(q^{\frac{1}{\ell_\fg}})\quad \text{and}\quad  q=\left(q^{\frac{1}{\ell_\fg}}\right)^{\ell_\fg}.
\]
For any
$\lambda, \mu\in P$, the expression $q^{(\lambda, \mu)}$ will mean $\left(q^{\frac{1}{\ell_\fg}}\right)^{\ell_\fg(\lambda, \mu)}$.

\begin{definition}\label{def}
  The generalised Jimbo quantum group associated with $\fg$ is the unital associative algebra over $\F$ generated by $E_{i},F_{i}  (i=1,2,\cdots,n)$ and  $K_{\lambda}  (\lambda\in P)$ subject to the following relations:
  \begin{align}
     K_{0}=1,\quad K_{\lambda}&K_{\mu}=K_{\lambda+\mu}, \label{eq: Qrel1}\\
     K_{\lambda}E_{j}K_{\lambda}^{-1}=&q^{(\lambda,\alpha_j)}E_{j}, \label{eq: Qrel2}\\
    K_{\lambda}F_{j}K_{\lambda}^{-1}=&q^{-(\lambda,\alpha_j)}F_{j}, \label{eq: Qrel3}\\
    E_{i}F_{j}-F_{j}E_{i}=&\delta_{ij}\frac{K_{i}-K_{i}^{-1}}{q_i-q_i^{-1}}, \label{eq: Qrel4}\\
    \sum\limits_{s=0}^{1-a_{ij}}(-1)^s \qbinom{1-a_{ij}}{s}_{q_i} &
E_{i}^{1-a_{ij}-s}E_{j}E_{i}^{s}=0,\ i\neq j, \label{eq: Qrel5}\\
    \sum\limits_{s=0}^{1-a_{ij}}(-1)^s \qbinom{1-a_{ij}}{s}_{q_i}&
F_{i}^{1-a_{ij}-s}F_{j}F_{i}^{s}=0,\ i\neq j,\label{eq: Qrel6}
 \end{align}
where $K_i=K_{\alpha_i}$, $q_i=q^{d_i}$ and
\begin{equation}\label{eq: qbinomial}
  [m]_{q_i}=\frac{q_i^{m}-q_i^{-m}}{q_i-q_i^{-1}},
  \quad[m]_{q_i}!=[1]_{q_i}[2]_{q_i}\cdots [m]_{q_i},
  \quad \qbinom{m}{k}_{q_i}=\frac{[m]_{q_i}!}{[m-k]_{q_i}![k]_{q_i}!},
  \end{equation}
for any $m\in\N$.
\end{definition}

\begin{remark}\label{rem}
The standard Jimbo quantum groups has generators $E_i$, $F_i$ and $k_i^{\pm 1}$, where $k_i$ correspond to our $K_{\alpha_i}$ for simple roots $\alpha_i$. The structure and representation theories of these two types of quantum groups are similar. However, for our purpose it is more convenient to work with Definition \ref{def}.
\end{remark}

\begin{remark}
By a slight abuse of notation and terminology, we shall denote the algebra in Definition \ref{def} by $\U_q(\fg)$, and
simply refer to it as the Jimbo quantum group.
\end{remark}

We will write $\U=\U_q(\fg)$.
It is well known that $\U$ is a Hopf algebra with  co-multiplication $\Delta$, co-unit $\varepsilon$ and antipode $S$:
\begin{eqnarray*}
&\Delta(K_{\lambda})=K_{\lambda}{\otimes}K_{\lambda}, \quad
\Delta(E_{i})=K_{i}{\otimes}E_{i}{+}E_{i}{\otimes}1, \quad
\Delta(F_{i})=F_{i}{\otimes}K_{i}^{-1}{+}1{\otimes}F_i,\\
&\varepsilon (K_{\lambda})=1,\quad \varepsilon (E_{i})=0,
\quad \varepsilon({F_{i}})=0,\\
&S(K_{\lambda})=K_{\lambda}^{-1},\quad S(E_{i})={-}K_{i}^{{-}1}E_{i},\quad S(F_{i})={-}F_{i}K_{i}.
\end{eqnarray*}
We will use Sweedler notation for the co-multiplication: $\Delta(x)=\sum_{(x)}x_{(1)}\otimes x_{(2)}$  for any $x\in {\rm U}$. The adjoint representation ${\rm ad}$ of ${\rm U}$ is defined as follows:
\begin{align}
\label{sec1.adad}
\textnormal{ad}(x)(y)=\sum_{(x)}x_{(1)}yS(x_{(2)}), \quad \forall x,y\in {\rm U}.
\end{align}

We denote by ${\rm U}^+$ (resp. ${\rm U}^-$) the subalgebra of ${\rm U}$ generated by all $E_{i}$ (resp. $F_{i}$), and by  ${\rm U}^0$ the subalgebra generated by $K_{\lambda}$ with $\lambda\in P$. Then the multiplication in ${\rm U}$ induces the  isomorphism ${\rm U}^-\otimes {\rm U}^0 \otimes {\rm U}^+\cong {\rm U}$.

\subsection{Representations}
\label{sec2.representations}
We follow \cite{Jantzen, Lu} to discuss the the  representation theory of ${\rm U}$.
Let $\sigma: P \to \mathbb{C}^{\times}$ be the group character such that
\begin{equation}\label{eq: gpchar}
  \begin{aligned}
 &\sigma(0)=1,\quad  \sigma(\lambda+\mu)=\sigma(\lambda)\sigma(\mu), \quad \forall \lambda, \mu\in P,\\
 & \sigma(\alpha_i)^2=1,\, i=1,\dots n.
\end{aligned}
 \end{equation}
There exists a one-dimensional ${\rm U}$-module $\F_{\sigma}$ associated with $\sigma$ defined by
\[
E_i.1=F_i.1=0, \quad  K_\mu.1=\sigma(\mu)1, \quad \forall\ 1\leq i\leq n,\ \mu\in P.
\]
We also have the following algebra automorphism $\psi_{\sigma}: {\rm U}\longrightarrow {\rm U}$
\begin{align}
\label{sec2.widehatsigma}
  \begin{array}{cccc}
  \psi_{\sigma}(E_{i})=\sigma(\alpha_i)E_{i},&\psi_{\sigma}(F_{i})=F_{i},&\psi_{\sigma}(K_{\mu})=\sigma(\mu)K_{\mu},
  \end{array}
\end{align}
for $1\leq i\leq n$ and  $\mu\in P$.

Let $V$ be a (left) module over ${\rm U}$. For any $\lambda\in P$ we define the weight space
\[ V_{\lambda, \sigma}=\{ v\in V | K_{\mu}.v=\sigma(\mu) q^{(\lambda,\mu)} v, \, \forall \mu \in P \}, \]
where $\sigma: P \to \mathbb{C}^{\times}$ is the group character as defined in \eqref{eq: gpchar}. It is well known that every finite dimensional $\U$-module is a weight module, i.e., a direct sum of its weight spaces.  We say $V$ is of type $\sigma$ if $V=V^{\sigma}=\oplus_{\lambda\in P} V_{\lambda, \sigma}$. Clearly, $V^\sigma=V^{{\bf 1}}\otimes \F_\sigma$, where ${\bf 1}$ is the trivial group character sending every $\lambda\in P$ to $1$. Therefore,  we will restrict our attention  to only type ${\bf 1}$ modules.

Of particular interest is the Verma module of ${\rm U}$. If $\lambda\in \mathfrak{h}^*$, we let $\chi_{\lambda}: {\rm U}^0 \to \F$ be the character such that $\chi_{\lambda}(K_{\mu})= q^{(\lambda,\mu)}$ for any $\mu\in P$. Then the Verma module $M(\lambda)$ associated to $\lambda$ is defined as the following induced ${\rm U}$-module
\[ M(\lambda): = {\rm U} \otimes_{{\rm U}^{\geq 0}}\F_{\lambda},   \]
where ${\rm U}^{\geq 0}$ is the subalgebra generated by  all $E_i$ and $K_{\mu}$, and $\F_{\lambda}$ denotes the one-dimensional ${\rm U}^{\geq 0}$-module with the action induced from $\chi_{\lambda}$  such that  all $E_i$ actions vanish. The vector $v_{\lambda}=1\otimes 1\in M(\lambda)_{\lambda}$ is called the highest weight vector with the property that
\[  E_i.v_{\lambda}=0, \quad K_{\mu}.v_{\lambda}=q^{(\lambda,\mu)} v_{\lambda}, \quad \forall 1\leq i\leq n, \mu\in P. \]
It turns out that every finite dimensional simple module of ${\rm U}$ is a quotient of $M(\lambda)$ by its unique maximal proper submodule $N(\lambda)$ for some  dominant weight $\lambda\in P^+$. We will write $V(\lambda)$ for the simple quotient module $M(\lambda)/N(\lambda)$.

For any $\lambda\in P^+$, denote by $\Pi(\lambda)$ the set of weights of the simple module $V(\lambda)$. Let $m_{\lambda}(\mu)$ be the multiplicity of $\mu$ in $V(\lambda)$, that is, $m_\lambda(\mu)=\dim V(\lambda)_{\mu}$. Then $\Pi(\lambda)$ is $W$-invariant, and
 $$m_{\lambda}(\mu)=m_{\lambda}(w \mu ), \quad \forall w\in W, \, \mu \in\Pi(\lambda).$$
 Furthermore, $m_{\lambda}(\lambda)=1$.

An important fact is  that finite dimensional modules separate the points of ${\rm U}$.

\begin{proposition}\cite[Chapter 5.11]{Jantzen}
  \label{sec1.injection}
 If an element $u\in {\rm U}$ satisfies $u.V=0$ for all finite dimensional ${\rm U}$-modules $V$, then $u=0$.
\end{proposition}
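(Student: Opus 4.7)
The plan is to assume $u \ne 0$ and produce a finite-dimensional simple module $V(\lambda)$ together with a vector on which $u$ acts non-trivially. By the triangular decomposition $\U \cong \U^-\otimes \U^0\otimes \U^+$ and the PBW theorem, fix weight-homogeneous PBW bases $\{F_\beta\}$ of $\U^-$ (with $F_\beta$ of weight $-\beta$) and $\{E_\gamma\}$ of $\U^+$ (with $E_\gamma$ of weight $\gamma$, for $\beta,\gamma\in Q^+$), and write
\[
u \;=\; \sum_{\beta,\gamma,\mu} a_{\beta,\gamma,\mu}\, F_\beta\, K_\mu\, E_\gamma,
\]
a finite $\F$-linear combination. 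The test modules will be the $V(\lambda)$ for $\lambda\in P^+$ taken deep in the dominant chamber, specifically with every $(\lambda,\alpha_i^{\vee})$ larger than the heights of the $\beta,\gamma$ appearing in $u$.

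For such $\lambda$ the projection $M(\lambda)\twoheadrightarrow V(\lambda)$ is bijective on weight spaces within bounded distance of the highest weight, because $N(\lambda)$ is generated by the vectors $F_i^{(\lambda,\alpha_i^{\vee})+1}v_\lambda$ whose weights sit strictly below this range. Evaluating $u$ on the highest weight vector and using $E_\gamma v_\lambda=\delta_{\gamma,0}v_\lambda$ and $K_\mu v_\lambda = q^{(\lambda,\mu)}v_\lambda$ gives
\[
u\cdot v_\lambda \;=\; \sum_{\beta,\mu} a_{\beta,0,\mu}\, q^{(\lambda,\mu)}\, F_\beta v_\lambda.
\]
By the bijectivity, the $\{F_\beta v_\lambda\}$ for the relevant (finitely many) $\beta$ are linearly independent in $V(\lambda)$; combined with a multivariable Vandermonde argument in the coordinates of $\lambda$ in the basis of fundamental weights (the characters $\mu\mapsto q^{(\lambda,\mu)}$ separate distinct $\mu$'s as $\lambda$ varies over $P^+$), the assumption $u v_\lambda=0$ forces $a_{\beta,0,\mu}=0$ for every $\beta,\mu$.

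To kill the remaining terms (those with $\gamma\ne 0$), I would iterate, using that $u$ annihilates \emph{every} vector of $V(\lambda)$, not just $v_\lambda$. In particular, $u\cdot F_\alpha v_\lambda=0$ for each PBW monomial $F_\alpha$; rewriting each $E_\gamma F_\alpha$ in PBW normal form via \eqref{eq: Qrel4} (and using \eqref{eq: Qrel2}--\eqref{eq: Qrel3} to commute $K$'s through $F$'s) produces a system of linear equations in the $a_{\beta,\gamma,\mu}$ that is triangular in the height of $\gamma$. Successive Vandermonde arguments then eliminate the coefficients one layer at a time, eventually forcing $u=0$ and contradicting $u\ne 0$.

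The main obstacle is the bookkeeping of this induction: after normal-ordering $E_\gamma F_\alpha$, one must identify, in the PBW expansion of $u\cdot F_\alpha v_\lambda$, a leading term whose coefficient involves a new (not-yet-eliminated) $a_{\beta,\gamma^*,\mu}$ linearly independently of the terms already known to vanish. Choosing a compatible PBW ordering (so each $\gamma^*$ is singled out at the right stage) and choosing $\alpha$ to probe $\gamma^*$ cleanly are the technical points; an alternative streamlining is to apply the Chevalley involution swapping $E_i\leftrightarrow F_i$ and to repeat the highest-weight argument on the lowest weight vector of $V(\lambda)$, killing all $\beta=0$ terms dually and thereby handling the $\U^+$ side by symmetry.
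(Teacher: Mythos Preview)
The paper does not supply its own proof of this proposition; it simply quotes \cite[Chapter 5.11]{Jantzen}. Your overall plan---transfer the question to Verma modules by choosing $\lambda$ so dominant that $N(\lambda)$ lies strictly below every weight appearing in $u\cdot F_\alpha v_\lambda$, then combine the PBW basis with a Vandermonde argument in $\lambda$---is exactly the standard route taken there, so within this paper there is no alternative argument to compare against.

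One genuine issue: the Chevalley-involution ``streamlining'' you float at the end does not substitute for the iteration. Testing on highest weight vectors kills only the coefficients $a_{\beta,0,\mu}$, and testing on lowest weight vectors kills only the $a_{0,\gamma,\mu}$; any mixed PBW term $F_\beta K_\mu E_\gamma$ with both $\beta\neq 0$ and $\gamma\neq 0$ survives both reductions, so the two together do not force $u=0$. The height-of-$\gamma$ induction you flag as the ``main obstacle'' is therefore unavoidable. It can be completed cleanly: once all coefficients with ${\rm ht}(\gamma)<h$ vanish, apply $u$ to $F_\alpha v_\lambda$ for every PBW monomial $F_\alpha$ of weight $\nu$ with ${\rm ht}(\nu)=h$. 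Only $E_\gamma$ of the same weight $\nu$ can contribute, and then $E_\gamma F_\alpha v_\lambda = c_{\gamma,\alpha}(\lambda)\,v_\lambda$ with $(c_{\gamma,\alpha}(\lambda))$ the Shapovalov matrix on $M(\lambda)_{\lambda-\nu}$, which is invertible because that weight space misses $N(\lambda)$ for $\lambda$ large. Inverting this matrix at each fixed $\lambda$ reduces the system to $\sum_\mu a_{\beta,\gamma,\mu}\,q^{(\lambda,\mu)}=0$, and then your Vandermonde argument in $\lambda$ finishes the layer.
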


\subsection{The quantised Harish-Chandra isomorphism}\label{sec:HC}
To prepare for the construction of a generating set for the centre of the generalised quantum group $\U_q(\fg)$,
we first consider a quantised Harish-Chandra isomorphism for $\U_q(\fg)$ in this section. The discussion here will be brief, and we follow closely \cite[Chapter 6]{Jantzen}.

Let $Z({\rm U})$ denote the centre of ${\rm U}$.
Recall that ${\rm U}=\bigoplus_{\nu\in Q} {\rm U}_{\nu}$ is a  $Q$-graded algebra with
\begin{equation}\label{eq: grading}
   {\rm U}_{\nu}=\{u\in{\rm U}\, |\, \text{ad}(K_{\lambda})u=q^{(\lambda,\nu)}u, \, \forall\lambda\in P\}.
 \end{equation}
In particular, ${{\rm U}}_0={\rm U}^0\oplus\bigoplus\limits_{\nu>0}{\rm U}_{-\nu}^-{\rm U}^0{\rm U}_{\nu}^+$, where ${\rm U}^0$ is the subalgebra generated by $K_{\lambda}^{\pm 1}, \lambda\in P$. Clearly we have $Z({\rm U})\subseteq {\rm U}_0$, and the following projection
\begin{equation}
\label{sec3.definition of pi}
\pi: {\rm U}_0\to{\rm U}^0
\end{equation}
is an algebra homomorphism.

For any $\lambda\in P$, we define the twisting algebra automorphism
\begin{equation}\label{eq: gamma}
\gamma_{\lambda}:{\rm U}^0\to{\rm U}^0,\quad\gamma_{\lambda}(h)=\chi_{\lambda}(h)h, \quad \forall h\in{\rm U}^0.
\end{equation}
Denote $\rho=\frac{1}{2} \sum_{\alpha\in \Phi^+} \alpha$.  Composing \eqref{sec3.definition of pi} and \eqref{eq: gamma} with $\lambda=-\rho$, we obtain the  Harish-Chandra homomorphism $\gamma_{-\rho}\circ \pi: {\rm U}_0 \to  {\rm U}^0$, which in particular maps the centre $Z({\rm U})$ to $ {\rm U}^0$.

We proceed to characterise the image $\gamma_{-\rho}\circ \pi(Z({\rm U}))$, which will turn out to be the Weyl group invariants in ${\rm U}^0$. For any simple Lie algebra  $\mathfrak{g}$, the associated Weyl group $W$ (cf. \cite{Humpreys}) is generated by the simple reflections $s_{\alpha_i}, 1\leq i\leq n$ with
\[ s_{\alpha_i}\lambda= \lambda- (\lambda, \alpha_i^{\vee})\alpha_i, \quad \forall \lambda\in \mathfrak{h}^*. \]
This induces a natural Weyl group action on ${\rm U}^0$, i.e., $wK_{\lambda}=K_{w\lambda}$ for any $w\in W, \lambda\in P$. Let ${\rm U}_{ev}^0=\bigoplus_{\lambda\in P}\F K_{2\lambda}$ be the subalgebra of ${\rm U}^0$ generated by the even elements $K_{2\lambda}$ for all $\lambda\in P$, and define
\[
({\rm U}_{ev}^0)^W:=\{ h\in {\rm U}_{ev}^0 \mid wh=h,\, \forall w\in W \},
\]
which is the subalgebra of $W$-invariants in ${\rm U}^0$.

The following result is the quantised Harish-Chandra isomorphism for the generalised  Jimbo quantum group ${\rm U}$. It is an adaption to ${\rm U}$ of the quantised Harish-Chandra isomorphism for the standard Jimbo quantum groups established in \cite{Jantzen, T}.  

\begin{theorem}\label{thm:HCiso}
The twisted algebra homomorphism
 \begin{equation}\label{eq: HCmap}
  \gamma_{-\rho}\circ\pi:Z({\rm U})\rightarrow({\rm U}_{ev}^0)^W
 \end{equation}
is an isomorphism.
\end{theorem}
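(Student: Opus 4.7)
The plan is to adapt Jantzen's classical approach [Chapter 6] to the generalised setting, which splits naturally into four steps: verify that $\gamma_{-\rho}\circ\pi$ is a well-defined algebra homomorphism; establish injectivity; show the image lies in $(\U_{ev}^0)^W$; and finally prove surjectivity. For the first point, the triangular decomposition yields $\U_0 = \U^0 \oplus I$ with $I = \bigoplus_{\nu > 0} \U^-_{-\nu} \U^0 \U^+_\nu$, and a direct computation using the commutation relations shows $I$ is a two-sided ideal of $\U_0$; hence $\pi$ is the algebra projection $\U_0 \to \U_0/I \cong \U^0$, and composition with the automorphism $\gamma_{-\rho}$ remains an algebra homomorphism.

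Injectivity follows from the standard Verma module action. For any $z \in Z(\U)$, positive-weight components of $z$ involve $E_i$ factors that annihilate the highest weight vector $v_\lambda$, so only the $\U^0$-component $\pi(z)$ contributes, giving $z \cdot v_\lambda = \chi_\lambda(\pi(z))\, v_\lambda$. If $\pi(z) = 0$ then $z$ kills $v_\lambda$ for every $\lambda \in P^+$, hence annihilates every simple $V(\lambda)$, and Proposition~\ref{sec1.injection} forces $z = 0$. For the containment of the image, $W$-invariance after the $\gamma_{-\rho}$ twist comes from the linkage principle: for sufficiently regular $\lambda$, the Verma modules $M(w\cdot\lambda)$ embed in $M(\lambda)$, forcing $\chi_{w\cdot\lambda}(\pi(z)) = \chi_\lambda(\pi(z))$ for every $w \in W$, and the $-\rho$ shift converts the dot action into the ordinary action on $\U^0$. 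The evenness restriction comes from the sign-character twists $\psi_\sigma$ of \eqref{sec2.widehatsigma}: $\psi_\sigma$ fixes $Z(\U)$ pointwise, so the Harish-Chandra image is invariant under $K_\mu \mapsto \sigma(\mu) K_\mu$ for every sign character, isolating exactly $\U_{ev}^0$.

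The main obstacle is surjectivity. My plan is to construct, for each $\lambda \in P^+$, a central element $z_\lambda$ whose Harish-Chandra image satisfies
\begin{equation*}
\gamma_{-\rho}(\pi(z_\lambda)) \;=\; c_\lambda\, m_\lambda \;+\; \sum_{\mu < \lambda} c_{\lambda,\mu}\, m_\mu,
\end{equation*}
where $m_\lambda = \sum_{\nu \in W\lambda} K_{2\nu}$, the sum runs over dominant $\mu$ strictly below $\lambda$ in the dominance order, and $c_\lambda \neq 0$. Since the $m_\lambda$ form a basis of $(\U_{ev}^0)^W$, a downward induction on the dominance order then gives surjectivity. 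A clean source of the $z_\lambda$ is the quantum-trace construction of \secref{sec:D}: the $q$-trace of $\Gamma_{V(\lambda)}$ is a central element whose leading Harish-Chandra term, computed by evaluating on the highest weight vector of $V(\lambda)$, is a nonzero multiple of $m_\lambda$, with the lower-order contributions coming from the non-extremal weights of $V(\lambda)$. The delicate part is verifying that the leading coefficient is nonzero and that the lower-order corrections are supported below $\lambda$; this is a representation-theoretic calculation using the weight multiplicities of $V(\lambda)$.
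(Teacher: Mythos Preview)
Your overall architecture matches the paper's appendix (injectivity via Verma modules and Proposition~\ref{sec1.injection}; $W$-invariance via linkage; evenness via $\psi_\sigma$; surjectivity by producing central elements with triangular Harish-Chandra images and inducting). Two points deserve comment, one a genuine gap and one a genuine difference.

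\textbf{The evenness step contains an incorrect claim.} You assert that $\psi_\sigma$ fixes $Z(\U)$ pointwise, but this is false. An algebra automorphism certainly sends the centre to itself, yet there is no reason for it to act as the identity there: already for $\fg=\fsl_2$ the quadratic Casimir changes sign under $\psi_\sigma$ with $\sigma(\alpha)=-1$. The paper's argument (Lemma~\ref{sec2.invainva}) is more delicate and does \emph{not} use pointwise invariance. It uses only that $\psi_\sigma(u)$ is again central, so its Harish-Chandra image $\sum_\mu a_\mu\,\sigma(\mu)K_\mu$ is again $W$-invariant; comparing with the $W$-invariance of $\sum_\mu a_\mu K_\mu$ yields $\sigma(\mu)=\sigma(w\mu)$ whenever $a_\mu\ne 0$, and specialising $\sigma(\alpha_i)=-1$ forces $(\mu,\alpha_i^\vee)$ even, i.e.\ $\mu\in 2P$. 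Your sketch therefore needs this correction: the interplay of $\psi_\sigma$ with the already-established $W$-invariance is essential, not pointwise fixity.

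\textbf{Surjectivity: a different and legitimate route.} Here you diverge from the paper. The appendix constructs the required central elements $z_\lambda$ via the Tanisaki--Jantzen bilinear form on $\U$ (Lemma~\ref{lem: Upairing}, Proposition~\ref{sec3.prop of second bilinear form}), defining $z_\lambda$ implicitly by $\langle u,z_\lambda\rangle=\mathrm{Tr}_{V(\lambda)}(uK_{2\rho}^{-1})$ and then computing $\gamma_{-\rho}\circ\pi(z_\lambda)=\sum_\eta m_\lambda(\eta)K_{-2\eta}$. You instead propose to use the elements $C_\lambda$ of \secref{sec:D}, whose Harish-Chandra images are computed in Proposition~\ref{prop: gpiC} as $\sum_\mu m_\lambda(\mu)K_{2\mu}$. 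This is logically sound: neither the construction of $C_\lambda$ nor Proposition~\ref{prop: gpiC} invokes Theorem~\ref{thm:HCiso}, so there is no circularity, and the leading coefficient $m_\lambda(\lambda)=1$ is automatically nonzero (so the ``delicate part'' you flag is in fact routine). Your route trades the bilinear-form machinery for the quasi-$R$-matrix machinery of \secref{sec:D}; the paper's route keeps the appendix self-contained in the Jantzen framework, while yours would merge the appendix with Section~3 and arguably shorten the overall argument.
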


Note that the \thmref{thm:HCiso} has been stated and proven with 
a geometric method in \cite{DP}. However, we have no idea whether the method is applicable to the standard quantum groups. Since it is of crucial importance for us, we give an elementary algebraic proof of the result in \appref{app: proof} as a reference.

\section{Construction of central elements}\label{sec:D}
\subsection{The construction}

The method developed in \cite{zhang1,zhang2} plays a crucial role in the construction of central elements. It enables us to obtain an infinite family of central elements from any finite dimensional $\U_q(\fg)$-modules.

Given any finite dimensional $\U_q(\fg)$-module $V$, denote by $\zeta: \U_q(\fg)\longrightarrow \End(V)$ the associated $\U_q(\fg)$ representation. In this case, we also say that $(V, \zeta)$ is a representation of $\U_q(\fg)$.
We define the partial trace
\[
\begin{aligned}
&\textnormal{Tr}_1: \End(V)\otimes {\rm U}_q(\fg) \longrightarrow \U_q(\fg), \\
&\textnormal{Tr}_1(f\otimes x):=\text{Tr}(f)x, \quad \forall f\in\End(V), \ x\in {\rm U}_q(\fg),
\end{aligned}
\]
where ${\rm Tr}$ is the usual trace operator on $\End(V)$.

The following result is proved in \cite[Proposition 1]{zhang1}.
\begin{theorem}[\cite{zhang1}]\label{thm:BGZ}
Let $\Gamma_V\in \End(V)\otimes \U_q(\fg)$. If $\Gamma_V$ commutes with $\U_q(\fg)$ in the sense that
\[
\Gamma_V(\zeta\otimes{\rm id})\Delta(x) - (\zeta\otimes{\rm id})\Delta(x) \Gamma_V=0, \quad \forall x\in\U_q(\fg),
\]
then
\begin{eqnarray}\label{eq:key}
\textnormal{Tr}_1\left((\zeta(K_{2\rho})\otimes{\rm id})\Gamma_V\right) \in Z(\U_q(\fg)).
\end{eqnarray}
\end{theorem}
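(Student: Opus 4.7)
Write $\Gamma_V = \sum_k A_k \otimes x_k$ with $A_k \in \End(V)$ and $x_k \in \U_q(\fg)$, and define the quantum trace $q{\rm Tr}(X) := {\rm Tr}(\zeta(K_{2\rho}) X)$ for $X \in \End(V)$. The element in question is then $c = \sum_k q{\rm Tr}(A_k)\, x_k$, and the goal is to show $yc = cy$ for every $y \in \U_q(\fg)$. The plan is to compute $yc$ by using the Hopf algebra structure to move $y$ into the $\End(V)$-slot, apply the commutation hypothesis there, and then collapse the resulting Sweedler sums using the antipode axioms.

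First I would record two preparatory facts. From the antipode axiom $\sum S(y_{(1)}) y_{(2)} = \varepsilon(y)\cdot 1$ applied to the inner comultiplication, one obtains the identity
\[
1 \otimes y \;=\; \sum_{(y)} \bigl(\zeta(S(y_{(1)})) \otimes 1\bigr)\bigl(\zeta(y_{(2)}) \otimes y_{(3)}\bigr)
\]
inside $\End(V) \otimes \U_q(\fg)$, where $\Delta^{(2)}(y) = \sum y_{(1)} \otimes y_{(2)} \otimes y_{(3)}$. Checking on the Chevalley generators also gives the pivotal relation $K_{-2\rho}\, y\, K_{2\rho} = S^2(y)$, equivalently $y K_{2\rho} = K_{2\rho}\, S^2(y)$ for all $y \in \U_q(\fg)$.

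Next I would substitute the decomposition into $yc = {\rm Tr}_1\bigl((\zeta(K_{2\rho}) \otimes 1)(1 \otimes y)\Gamma_V\bigr)$. Holding the outer Sweedler index $y_{(1)}$ fixed, the remaining factor $\sum (\zeta(y_{(2)}) \otimes y_{(3)})$ is the image under $(\zeta \otimes {\rm id})\Delta$ of an element of $\U_q(\fg)$, so the commutation hypothesis slides it past $\Gamma_V$. Expanding the partial trace then produces the factor $q{\rm Tr}\bigl(\zeta(S(y_{(1)})) A_k \zeta(y_{(2)})\bigr)$; by cyclicity of the ordinary trace on $\End(V)$ combined with $y K_{2\rho} = K_{2\rho}\, S^2(y)$, this equals $q{\rm Tr}\bigl(\zeta(S^2(y_{(2)}) S(y_{(1)})) A_k\bigr)$. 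Finally the Hopf-algebraic identity
\[
\sum_{(y)} S^2(y_{(2)})\, S(y_{(1)}) \;=\; S\!\Bigl(\sum_{(y)} y_{(1)} S(y_{(2)})\Bigr) \;=\; \varepsilon(y)\cdot 1
\]
collapses the inner Sweedler sum, leaving $yc = \sum_k q{\rm Tr}(A_k)\, x_k\, y = cy$, as required.

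The main technical obstacle is the careful bookkeeping of Sweedler indices through the partial trace and pinning down the precise sign convention for how $K_{2\rho}$ intertwines with $S^2$ under the paper's choice of coproduct. Conceptually the argument is the standard pivotal Hopf algebra proof: $K_{2\rho}$ is the pivot that makes ${\rm Tr}(\zeta(K_{2\rho})\cdot)$ into an ${\rm ad}$-invariant quantum trace on $\End(V)$, and this invariance is exactly what converts the ${\rm ad}$-type commutation of $\Gamma_V$ into centrality of its partial quantum trace.
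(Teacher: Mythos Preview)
The paper does not actually prove this theorem: it is quoted verbatim from \cite[Proposition~1]{zhang1} and immediately followed by its corollary, so there is no ``paper's own proof'' to compare against. That said, your argument is correct and is essentially the classical proof from \cite{zhang1,zhang2}.

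A few confirmations on the points you flagged as delicate. With the paper's conventions one indeed has $S^2(E_i)=K_i^{-1}E_iK_i=q_i^{-2}E_i$ and $K_{2\rho}^{-1}E_iK_{2\rho}=q^{-(2\rho,\alpha_i)}E_i=q_i^{-2}E_i$ (and similarly for $F_i$, $K_\lambda$), so $K_{2\rho}^{-1}yK_{2\rho}=S^2(y)$ holds exactly as you wrote. The Sweedler bookkeeping also closes: after cyclicity and the pivot relation you are left with $\sum S^2(y_{(2)})S(y_{(1)})\otimes y_{(3)}=\sum S\bigl(y_{(1)}S(y_{(2)})\bigr)\otimes y_{(3)}$, and grouping the first two legs via coassociativity as $(\Delta\otimes\mathrm{id})\Delta(y)$ collapses this to $1\otimes y$, giving $yc=cy$. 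The one place to be slightly careful in writing it up is the phrase ``holding the outer Sweedler index $y_{(1)}$ fixed'': strictly speaking the commutation is applied by linearity to the element $\sum_{(y)}\bigl(\zeta(S(y_{(1)}))\otimes 1\bigr)\cdot(\zeta\otimes\mathrm{id})\Delta(y_{(2)})$, not term-by-term, but this is purely cosmetic.
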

It immediately follows that
\begin{corollary}\label{rem:gamma-n}
If $\Gamma_V\in \End(V)\otimes \U_q(\fg)$ commutes with $\U_q(\fg)$,  then
for all $m\in \N^+$,  the elements
${\rm Tr}_1((\zeta(K_{2\rho})\otimes {\rm id})\Gamma_V^m)$
belong to the centre of $\U_q(\fg)$.
\end{corollary}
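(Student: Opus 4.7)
The plan is to derive this as a direct application of \thmref{thm:BGZ} to the operator $\Gamma_V^m$ in place of $\Gamma_V$. What I need to verify is that $\Gamma_V^m$ still satisfies the hypothesis of the theorem, namely that it commutes with $(\zeta\otimes{\rm id})\Delta(x)$ for every $x\in\U_q(\fg)$.

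First I would observe that the set
\[
C:=\{Y\in \End(V)\otimes \U_q(\fg) \mid Y(\zeta\otimes{\rm id})\Delta(x)=(\zeta\otimes{\rm id})\Delta(x)Y,\ \forall x\in\U_q(\fg)\}
\]
is the commutant of the subalgebra $(\zeta\otimes{\rm id})\Delta(\U_q(\fg))$ inside $\End(V)\otimes \U_q(\fg)$, hence is itself a subalgebra. By hypothesis $\Gamma_V\in C$, so $\Gamma_V^m\in C$ for every $m\in\N^+$; equivalently, a one-line induction on $m$ using associativity shows $\Gamma_V^m(\zeta\otimes{\rm id})\Delta(x)=(\zeta\otimes{\rm id})\Delta(x)\Gamma_V^m$.

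With this in hand, I would invoke \thmref{thm:BGZ} with $\Gamma_V$ replaced by $\Gamma_V^m$: the conclusion \eqref{eq:key} yields
\[
{\rm Tr}_1\bigl((\zeta(K_{2\rho})\otimes{\rm id})\Gamma_V^m\bigr)\in Z(\U_q(\fg)),
\]
which is exactly the claim. There is no real obstacle here; the only point to be careful about is that the commutation is an equality in the associative algebra $\End(V)\otimes \U_q(\fg)$, so that multiplication of both sides of the $m=1$ relation by $\Gamma_V$ on the appropriate side preserves the commutation, allowing the inductive step to go through without further work.
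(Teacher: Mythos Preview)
Your proposal is correct and matches the paper's approach: the paper states the corollary as an immediate consequence of \thmref{thm:BGZ} without spelling out a proof, and your argument supplies precisely the obvious detail---that the commutant is a subalgebra, so $\Gamma_V^m$ inherits the commutation hypothesis and \thmref{thm:BGZ} applies with $\Gamma_V^m$ in place of $\Gamma_V$.
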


\subsection{Constructing $\Gamma_V$}
Now the problem is to develop non-trivial elements $\Gamma$. We will do this following \cite{zhang1,zhang2}.

We need the explicit expression for the quasi-$R$-matrix $\mathfrak{R}$ of $\U_q(\fg)$.
Recall the braid group action on the quantum group. The braid group $\mathfrak{B}_{\mathfrak{g}}$ associated with the Weyl group $W$ of $\mathfrak{g}$ is generated by $n$ elements $\sigma_i$ with relations
\[ \sigma_i \sigma_j\sigma_i\sigma_j\dots =\sigma_j\sigma_i\sigma_j\sigma_i\dots, \quad i\neq j, \]
where the number of $\sigma$'s on each side is $m_{ij}$,   which is determined by the Cartan matrix. We have $m_{ij}=2,3,4,6$ for $a_{ij}a_{ji}=0,1,2,3$. The braid group $\mathfrak{B}_{\mathfrak{g}}$ acts as group of algebra automorphisms of ${\rm U}_q(\mathfrak{g})$. For the explicit algebra automorphism $\mathcal{T}_i$ corresponding to each generator $\sigma_i$, we refer to \cite{Lu, KR}. These are usually referred to as the Lusztig automorphisms.

Write $s_{i}$ for the simple reflection $s_{\alpha_i}$ in the Weyl group $W$ of $\mathfrak{g}$.  Let $w_0=s_{i_1}s_{i_2}\dots s_{i_N}$ be a reduced expression of the longest element of $W$. Then the positive roots of $\mathfrak{g}$ are given by the following successive actions on simple roots $\alpha_i$
\[ \beta_1=\alpha_{i_1}, \beta_2=s_{i_1}(\alpha_{i_2}), \dots, \beta_N= s_{i_1}\dots s_{i_{N-1}}(\alpha_{i_N}). \]
We define the root vectors of ${\rm U}_q(\mathfrak{g})$ by
\[ E_{\beta_r}= \mathcal{T}_{i_1}\mathcal{T}_{i_2}\dots \mathcal{T}_{i_{r-1}}(E_{i_r}), \quad F_{\beta_r}= \mathcal{T}_{i_1}\mathcal{T}_{i_2}\dots \mathcal{T}_{i_{r-1}}(F_{i_r})  \]
for all $1\leq r\leq N$. Note that $E_{\beta_r}\in {\rm U}^{+}$ and $F_{\beta_r}\in {\rm U}^{-}$. By \cite[4.1]{Lu}, we have
\begin{equation}\label{eq: tilR}
     \mathfrak{R}= \sum_{r_1,\dots, r_N=0}^{\infty} \prod_{j=1}^N q_{\beta_j}^{\frac{1}{2}r_j(r_j+1)} \frac{(1-q_{\beta_j}^{-2})^{r_j}}{[r_j]_{q_{\beta_j}}!} F_{\beta_j}^{r_j} \otimes E_{\beta_j}^{r_j} ,
\end{equation}
where $q_{\beta}:=q_i$ if $\beta$ and $\alpha_i$ lie in the same orbit under the action of $W$, the factors in the product \eqnref{eq: tilR} appear in the order $\beta_N,\beta_{n-1},\cdots,\beta_1.$
Now $\mathfrak{R}$ is an infinite sum, which belongs to some completion of $\U_q(\fg)\otimes\U_q(\fg)$.

The quasi $R$-matrix $\mathfrak{R}$ has many remarkable properties. In particular, the property discussed below will be important for us. One can easily verify that there is an algebra automorphism $\Psi$ of $\U_q(\fg)\otimes\U_q(\fg)$ defined by
\begin{align}\label{eq: Psi1}
\Psi(K_{\lambda}\otimes 1)=K_{\lambda}\otimes1,\quad&\Psi(E_i\otimes 1)=E_i\otimes K_i^{-1},\quad\Psi(F_i\otimes 1)=F_i\otimes K_i,\\ \label{eq: Psi2}
\Psi(1\otimes K_{\lambda})=1\otimes K_{\lambda},\quad&\Psi(1\otimes E_i)=K_i^{-1}\otimes E_i ,\quad\Psi(1\otimes F_i)=K_i\otimes F_i.
\end{align}
Then it is well-known that $\mathfrak{R}$ satisfies the following relation (see, e.g., \cite[\S 4.3]{T2}).

\begin{proposition}
Let $\mathfrak{R}^T= T(\mathfrak{R})$, where $T$ is the flip $T(x\otimes y)=y\otimes x$ for any $x,y \in {\rm U}_q(\fg)$. We have 
\begin{align}\label{eq: quaiRcomm}
  \mathfrak{R}\Delta(x)=\Psi(\Delta'(x))\mathfrak{R}, \quad \mathfrak{R}^T\Delta'(x)=\Psi(\Delta(x))\mathfrak{R}^T,\quad\forall x\in\U_q(\fg).
  \end{align}
\end{proposition}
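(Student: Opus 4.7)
The plan is to verify the first identity $\mathfrak{R}\Delta(x) = \Psi(\Delta'(x))\mathfrak{R}$ on the algebra generators $x \in \{K_\lambda, E_i, F_i\}$ of $\U_q(\fg)$, and then to deduce the second identity by applying the flip $T$.

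For $x = K_\lambda$, I would argue by weight-homogeneity. Each summand of $\mathfrak{R}$ in \eqref{eq: tilR} lies in ${\rm U}^-_{-\nu}\otimes{\rm U}^+_{\nu}$ for some $\nu\in Q^+$, so conjugation by $\Delta(K_\lambda) = K_\lambda\otimes K_\lambda$ multiplies the two tensor factors by the reciprocal scalars $q^{-(\lambda,\nu)}$ and $q^{(\lambda,\nu)}$ and hence fixes the summand. Together with $\Psi(\Delta'(K_\lambda)) = K_\lambda\otimes K_\lambda = \Delta(K_\lambda)$, this settles the case $x = K_\lambda$.

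For $x = E_i$ and $x = F_i$, I would expand both sides of $\mathfrak{R}\Delta(x) = \Psi(\Delta'(x))\mathfrak{R}$ using the decomposition $\mathfrak{R} = \sum_{\nu\in Q^+}\mathfrak{R}_\nu$ with $\mathfrak{R}_\nu\in {\rm U}^-_{-\nu}\otimes {\rm U}^+_{\nu}$, together with the explicit formulas $\Delta(E_i) = K_i\otimes E_i + E_i\otimes 1$ and $\Psi(\Delta'(E_i)) = E_i\otimes 1 + K_i^{-1}\otimes E_i$ (and their $F_i$-analogues obtained from \eqref{eq: Psi1}--\eqref{eq: Psi2}). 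Separating the resulting equation by $Q$-degree yields a recursion expressing each $\mathfrak{R}_\nu$ in terms of the lower-weight components $\mathfrak{R}_{\nu-\alpha_i}$, which together with the normalisation $\mathfrak{R}_0 = 1\otimes 1$ determines $\mathfrak{R}$ uniquely in the completion. Verifying that the explicit formula \eqref{eq: tilR} satisfies this recursion is the standard computation of Lusztig \cite{Lu} and Jantzen \cite{Jantzen}, Chapter 7; it uses only the commutation relation \eqref{eq: Qrel4}, the properties of the Lusztig automorphisms $\mathcal{T}_i$ defining the root vectors, and elementary identities for the quantum binomial coefficients, and since none of these inputs are affected by enlarging the Cartan part from $\bigoplus_i \Z\alpha_i$ to $P$, the argument carries over verbatim to the generalised Jimbo setting.

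Finally, for the second identity I would apply $T$ to both sides of the first. A direct check on the generators of $\U\otimes\U$ using \eqref{eq: Psi1}--\eqref{eq: Psi2} shows $T\circ\Psi = \Psi\circ T$, and since $T$ is an algebra homomorphism with $T(\Delta(x)) = \Delta'(x)$ and $T(\mathfrak{R}) = \mathfrak{R}^T$, applying $T$ transforms $\mathfrak{R}\Delta(x) = \Psi(\Delta'(x))\mathfrak{R}$ into $\mathfrak{R}^T\Delta'(x) = \Psi(\Delta(x))\mathfrak{R}^T$, as required. The main obstacle is the weight-by-weight verification in the middle step; once that recursion has been checked for \eqref{eq: tilR}, the $K_\lambda$ case and the passage to the second identity are essentially bookkeeping.
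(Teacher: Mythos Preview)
Your proposal is sound. The paper does not actually supply its own proof of this proposition: it merely states the result as ``well-known'' and cites \cite[\S 4.3]{T2}. So there is nothing to compare against at the level of argument.

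What you have written is the standard route that underlies those references. Checking the identity on generators is legitimate because the relation $\mathfrak{R}\Delta(x)=\Psi(\Delta'(x))\mathfrak{R}$ is multiplicative in $x$ (using that $\Delta$, $\Delta'$ and $\Psi$ are algebra homomorphisms), so the extension from $\{K_\lambda,E_i,F_i\}$ to all of $\U_q(\fg)$ is automatic; you might state that explicitly. Your computation of $\Psi(\Delta'(E_i))$ is correct, the weight-homogeneity argument for $K_\lambda$ is fine, and the observation that $T\circ\Psi=\Psi\circ T$ and that $T$ is an algebra homomorphism handles the passage to the second identity cleanly. The only substantive step you defer---verifying that the product formula \eqref{eq: tilR} satisfies the weight-by-weight recursion coming from the $E_i$ and $F_i$ equations---is precisely the content of Lusztig's and Jantzen's computations, and your remark that enlarging the Cartan part from the root lattice to $P$ does not affect any of the inputs is correct, since the recursion only involves $K_{\alpha_i}$ and the root vectors. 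In short: the paper cites the result, you sketch the proof; your sketch is accurate.
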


Now we turn to the construction of $\Gamma_V$.
Let $(V, \zeta)$ be a finite dimensional representation of $\U_q(\fg)$.
Define
\begin{align}\label{eq: RV}
 \mathcal{R}_V:=(\zeta\otimes {\rm id})(\mathfrak{R}), \quad
\mathcal{R}_{V}^T:=(\zeta\otimes {\rm id})\mathfrak{R}^T.
\end{align}
Then both $\mathcal{R}_{V}$ and $\mathcal{R}_{V}^T$ belong to $\End(V)\otimes \U_q(\fg)$, which are well defined.

Denote by $\Pi(V)$ the set of weights of the $\U_q(\fg)$-module $V$. For any $\eta\in\Pi(V)$, we denote by $V_\eta\subset V$ the weight space of weight $\eta$.
Let $P^V_{\eta}:V\to V_{\eta}$ be the projection from $V$ to $V_{\eta}$, and define the following
element in $\End(V)\otimes\U_q(\fg)$.
\begin{align}\label{eq: KV}
\mathcal{K}_{V}:=\sum_{\eta\in\Pi(V)}P_{\eta}^V\otimes K_{\eta}.
\end{align}
Regard this as an endomorphism of $V\otimes\U_q(\fg)$ with the inverse $\mathcal{K}_{V}^{-1}=\sum_{\eta\in\Pi(V)}P_{\eta}^V\otimes K_{\eta}^{-1}$. 

Recall that the generators of $\U_q(\fg)$ are $K_{\lambda},E_i,F_i$ with $\lambda\in P,1\leq i\leq n$, and $\zeta: \U_q(\fg)\longrightarrow \End(V)$ the associated $\U_q(\fg)$ representation of $V$, then $\mathcal{K}_V$ satisfies the following relations, 
\begin{align}
\mathcal{K}_V(\zeta(K_{\lambda})\otimes K_{\mu})=(\zeta(K_{\lambda})\otimes K_{\mu})\mathcal{K}_V\label{eq: K act on V}\\
\mathcal{K}_V(\zeta(E_i)\otimes 1)=(\zeta(E_i)\otimes K_{i})\mathcal{K}_V,\ \mathcal{K}_V(1\otimes E_i)&=(\zeta(K_i)\otimes E_i)\mathcal{K}_V,\label{eq: K act on EV}\\
\mathcal{K}_V(\zeta(F_i)\otimes 1)=(\zeta(F_i)\otimes K_{i}^{-1})\mathcal{K}_V,\ \mathcal{K}_V(1\otimes F_i)&=(\zeta(K_i^{-1})\otimes F_i)\mathcal{K}_V.\label{eq: K act on FV}
\end{align}
Recall that $\Psi$ is the algebra isomorphism defined in \eqnref{eq: Psi1}, \eqnref{eq: Psi2}, and so is $(\zeta\otimes {\rm id})\Psi$,
One can verify the following relations by checking them on the generators of $\U_q(\fg)$, which are equal to the equations \eqnref{eq: K act on V}, \eqnref{eq: K act on EV}, \eqnref{eq: K act on FV},
\begin{align}
\mathcal{K}_{V}\Psi(\Delta(x))=\Delta(x)\mathcal{K}_{V}, \quad \mathcal{K}_{V}\Psi(\Delta'(x))=\Delta'(x)\mathcal{K}_V,\ \forall x\in\U_q(\fg),\label{eq: Kcomm}
\end{align}
where we have ignored $\zeta\otimes {\rm id}$ before $\Delta(x)$, $\Psi(\Delta(x))$ and etc. to simplify the notation.

\begin{proposition}\label{prop:quasiRcomm}
Retain notation above. 
Let \begin{align}\label{eq: LvLvT}
L_V:=\mathcal{K}_{V}\mathcal{R}_{V}, \quad L_V^T:=\mathcal{K}_{V}\mathcal{R}_{V}^T.
\end{align}
Then for any finite dimensional representation $(V, \zeta)$ of $\U_q(\fg)$,
\begin{align}
  L_V\Delta(x)=\Delta'(x)L_V,\quad
 L_V^T\Delta'(x)=\Delta(x)L_V^T,\quad x\in\U_q(\fg). \label{eq:R-matrix}
\end{align}
\end{proposition}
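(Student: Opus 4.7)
The plan is to derive the two identities in \eqref{eq:R-matrix} by directly composing the two commutation relations already available: the quasi-$R$-matrix intertwining relation \eqref{eq: quaiRcomm} (applied in the representation $V$ via $\zeta \otimes \mathrm{id}$) and the $\mathcal{K}_V$-intertwining relations \eqref{eq: Kcomm}. The key observation is that $\Psi$ appears on the right-hand side of the first and on the left-hand side of the second, so the two can be glued together in a way that eliminates $\Psi$ entirely.

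Concretely, I would first apply $\zeta \otimes \mathrm{id}$ to both identities in \eqref{eq: quaiRcomm} to obtain
\begin{equation*}
\mathcal{R}_V \Delta(x) = \Psi(\Delta'(x)) \mathcal{R}_V, \qquad \mathcal{R}_V^T \Delta'(x) = \Psi(\Delta(x)) \mathcal{R}_V^T,
\end{equation*}
for all $x \in \U_q(\fg)$, where as in the paper I suppress $\zeta \otimes \mathrm{id}$ on the $\Delta$'s and on $\Psi$. Multiplying the first equation on the left by $\mathcal{K}_V$ and invoking \eqref{eq: Kcomm} gives
\begin{equation*}
L_V \Delta(x) \;=\; \mathcal{K}_V \mathcal{R}_V \Delta(x) \;=\; \mathcal{K}_V \Psi(\Delta'(x)) \mathcal{R}_V \;=\; \Delta'(x) \mathcal{K}_V \mathcal{R}_V \;=\; \Delta'(x) L_V.
\end{equation*}
The same manipulation applied to the second equation, using the other half of \eqref{eq: Kcomm}, yields
\begin{equation*}
L_V^T \Delta'(x) \;=\; \mathcal{K}_V \mathcal{R}_V^T \Delta'(x) \;=\; \mathcal{K}_V \Psi(\Delta(x)) \mathcal{R}_V^T \;=\; \Delta(x) \mathcal{K}_V \mathcal{R}_V^T \;=\; \Delta(x) L_V^T.
\end{equation*}

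In terms of obstacles, there are essentially none at this stage: the heavy lifting has already been done in establishing \eqref{eq: quaiRcomm} and \eqref{eq: Kcomm}. The only point that deserves a brief comment is the well-definedness of the products, since $\mathfrak{R}$ lives in a completion of $\U_q(\fg) \otimes \U_q(\fg)$; this is not a problem here because $\zeta$ is finite-dimensional, so $\mathcal{R}_V$ and $\mathcal{R}_V^T$ are honest elements of $\End(V) \otimes \U_q(\fg)$, and thus so are $L_V$ and $L_V^T$. It also suffices to verify the relations on the algebra generators $K_\lambda, E_i, F_i$, which is exactly how \eqref{eq: Kcomm} was deduced, so no further generator-level check is needed for \eqref{eq:R-matrix} itself.
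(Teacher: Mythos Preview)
Your proposal is correct and follows essentially the same approach as the paper: apply $\zeta\otimes\mathrm{id}$ to \eqref{eq: quaiRcomm}, then left-multiply by $\mathcal{K}_V$ and invoke \eqref{eq: Kcomm} to cancel $\Psi$. Your added remarks on well-definedness of $\mathcal{R}_V$, $\mathcal{R}_V^T$ in $\End(V)\otimes\U_q(\fg)$ are apt but not strictly needed, as the paper has already addressed this just before the proposition.
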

\begin{proof} It follows from \eqref{eq: quaiRcomm} that for all $x\in\U_q(\fg)$,
\[
\mathcal{R}_{V}\Delta(x)=\Psi(\Delta'(x)) \mathcal{R}_{V}.
\]
Combining this with the second relation of \eqref{eq: Kcomm}, we obtain
\[
\mathcal{K}_{V}\mathcal{R}_{V}\Delta(x)=\Delta'(x) \mathcal{K}_{V} \mathcal{R}_{V},
\]
i.e., $L_V\Delta(x)=\Delta'(x)L_V$.

The other relation of \ref{eq:R-matrix} can be similarly provced by noting that
\[
\mathfrak{R}^T\Delta'(x)=\Psi(\Delta(x))\mathfrak{R}^T, \quad \forall x\in\U_q(\fg).
\]
We remark that the universal $R$-matrix of the Drinfeld version of the quantum group in any representation of finite rank satisfies relations formally the same as \eqref{eq:R-matrix}.
\end{proof}

Now we can construct a $\Gamma_V$ satisfying the condition of Theorem \ref{thm:BGZ} as follows \cite{zhang1}.
\begin{theorem}\label{thm: Gamma}
Given any finite dimensional representation $(V,\zeta)$ of $\U_q(\fg)$, let
\begin{align}\label{eq: defgamma}
  \Gamma_{V}:=L_V^T L_V.
\end{align}
Then $\Gamma_{V}\in {\rm End}(V)\otimes \U_q(\fg)$ commutes with $(\zeta\otimes {\rm id})\Delta(x)$ for all $x\in\U_q(\fg)$
\end{theorem}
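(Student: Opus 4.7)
The proof is essentially a one-line chase using the two intertwining identities supplied by Proposition \ref{prop:quasiRcomm}. The plan is to fix an arbitrary $x\in\U_q(\fg)$ and push $\Delta(x)$ from the right of $\Gamma_V=L_V^T L_V$ to the left, applying each of the two relations of \eqref{eq:R-matrix} in turn.

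Concretely, I would start from $\Gamma_V(\zeta\otimes{\rm id})\Delta(x)=L_V^T L_V(\zeta\otimes{\rm id})\Delta(x)$, apply the first identity $L_V\Delta(x)=\Delta'(x)L_V$ to convert the interior into $L_V^T\Delta'(x)L_V$, then apply the second identity $L_V^T\Delta'(x)=\Delta(x)L_V^T$ to obtain $(\zeta\otimes{\rm id})\Delta(x)L_V^T L_V=(\zeta\otimes{\rm id})\Delta(x)\Gamma_V$. The opposite comultiplication $\Delta'$ serves as the intermediate object that each of $L_V$ and $L_V^T$ individually converts $\Delta$ into (or out of), so that composing them yields an honest intertwiner for $\Delta$ itself. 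This is the standard ``$R\bar R$ is central'' trick familiar from the universal $R$-matrix story, now transcribed into the representation-valued setting of \eqref{eq: LvLvT}.

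There is no real obstacle here, provided Proposition \ref{prop:quasiRcomm} is in hand; the content of the theorem has already been absorbed into the construction of $L_V$ and $L_V^T$. The one bookkeeping point worth stating explicitly is that all manipulations take place inside $\End(V)\otimes\U_q(\fg)$ (not in a completion), since $\zeta$ is finite dimensional and hence $\mathcal{R}_V,\mathcal{R}_V^T$ reduce to finite sums on each weight space of $V$, making $L_V$ and $L_V^T$ honest elements of $\End(V)\otimes\U_q(\fg)$. Given this, the conclusion $\Gamma_V(\zeta\otimes{\rm id})\Delta(x)=(\zeta\otimes{\rm id})\Delta(x)\Gamma_V$ follows for every $x\in\U_q(\fg)$, which is exactly the hypothesis of Theorem \ref{thm:BGZ}, so $\Gamma_V$ may then be fed into \eqref{eq:key} to produce central elements, as anticipated by Corollary \ref{rem:gamma-n}.
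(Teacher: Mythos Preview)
Your proposal is correct and follows exactly the same argument as the paper: start from $\Gamma_V\Delta(x)=L_V^T L_V\Delta(x)$, apply $L_V\Delta(x)=\Delta'(x)L_V$ to reach $L_V^T\Delta'(x)L_V$, then apply $L_V^T\Delta'(x)=\Delta(x)L_V^T$ to obtain $\Delta(x)\Gamma_V$. Your additional remark that $L_V,L_V^T$ are honest elements of $\End(V)\otimes\U_q(\fg)$ because $V$ is finite dimensional is a helpful point the paper leaves implicit.
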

\begin{proof}
One of the relations in \propref{prop:quasiRcomm} states that $L_V\Delta(x)=\Delta'(x)L_V$ for all $x\in\U_q(\fg)$.  Thus
\[
\Gamma_V\Delta(x) =L_VL_V^T\Delta(x)=
L_V^T\Delta'(x)L_V.
\]
By using \propref{prop:quasiRcomm} again, we can re-write the right hand side as
\[
\Delta(x)L_V^TL_V=\Delta(x)\Gamma_V.
\]
Hence $\Gamma_V\Delta(x)=\Delta(x)\Gamma_V$, proving the theorem.
\end{proof}

\subsection{Central elements of the generalised quantum group}\label{sec:DtoJ}
Next we turn to the construction of central elements of the generalised quantum groups.
Let $(\zeta_{\lambda},V(\lambda))$ be a finite dimensional irreducible representation of $\U_q(\fg)$ with highest weight $\lambda\in P^+$. Define
\begin{equation}\label{eq: Celmt}
\mathcal{C}_{\lambda}^{(m)}:=\textnormal{Tr}_1((\zeta_{\lambda}(K_{2\rho})\otimes 1)\Gamma_{V(\lambda)}^m), \quad m\in\N^+.
\end{equation}
Then by \thmref{thm: Gamma} and Corollary \ref{rem:gamma-n}, $\mathcal{C}^{(m)}_{\lambda}$ belong to the centre $Z({\rm U})$ of $\U_q(\fg)$ for all $\lambda\in P^+$ and $m\in\N^+$.

\begin{remark}
We also expect that if a given $\lambda\in P^+$ satisfies certain conditions, then $\mathcal{C}^{(m)}_{\lambda}$ for finitely many $m$ generate the centre of $\U_q(\fg)$; see Section \ref{sect:rmk} for the precise statement.
\end{remark}

Let $C_{\lambda}=\mathcal{C}^{(1)}_{\lambda}$ be the central element of $\U_q(\fg)$ defined by \eqref{eq: Celmt} for $m=1$. The central elements $C_\lambda$ are particularly easy to study, since the action of $L_{V(\lambda)}$, $L_{V(\lambda)}^T$ can be written specifically.  Next we will analyze them and give the formulae of $C_{\lambda}$ briefly.

Let $$R_N:=\{(r_1,r_2,\cdots,r_N)|r_i\in\mathbb{N}\}.$$ For any $\mathbf{r}=(r_1,r_2,\cdots,r_N)\in R_{N}$, define
\begin{gather*}
F_{\mathbf{r}}:=F_{\beta_N}^{r_N}\cdots F_{\beta_2}^{r_2}F_{\beta_1}^{r_1},\ 
E_{\mathbf{r}}:=E_{\beta_N}^{r_N}\cdots E_{\beta_2}^{r_2}E_{\beta_1}^{r_1},\ 
K_{\mathbf{r}}:=K_{\beta_N}^{r_N}\cdots K_{\beta_2}^{r_2}K_{\beta_1}^{r_1},\\
D_{\mathbf{r}}:=\prod_{j=1}^Nq_{\beta_j}^{\frac{1}{2}s_j(s_j+1)} \frac{(1-q_{\beta_j}^{-2})^{s_j}}{[s_j]_{q_{\beta_j}}!},\quad
\sum \mathbf{r}=\sum_{i=1}^Nr_i\beta_i.
\end{gather*}
Then \begin{align}\label{eq: LV action}
L_{V(\lambda)}=\mathcal{K}_{V(\lambda)}\sum_{\mathbf{r}\in R}D_{\mathbf{r}}(\zeta_{\lambda}(F_{\mathbf{r}})\otimes E_{\mathbf{r}}).
\end{align}
Similarly, \begin{align}\label{eq: LV^T action}
L_{V(\lambda)}^T=\mathcal{K}_{V(\lambda)}\sum_{\mathbf{t}\in R_N}D_{\mathbf{t}}(\zeta_{\lambda}(E_{\mathbf{t}})\otimes F_{\mathbf{t}})=\sum_{\mathbf{t}\in R_N}D_{\mathbf{t}}
(\zeta_{\lambda}(E_{\mathbf{t}}K_{\mathbf{t}}^{-1})\otimes K_{\mathbf{t}}F_{\mathbf{t}})K_{V(\lambda)}.
\end{align}
where the last equation follows from \eqnref{eq: K act on EV}, \eqnref{eq: K act on FV}. 

\begin{remark}
Note that $V(\lambda)$ is finite dimensional, there are only a finite number of nonzeros in the summation terms of \eqnref{eq: LV action}, \eqnref{eq: LV^T action}.  
\end{remark}

Recall that $\Gamma_V=L_V^TL_V$. By \eqnref{eq: LV action} and \eqnref{eq: LV^T action}, we can get  
\begin{align}
\Gamma_{V(\lambda)}
=\sum_{\mathbf{r},\mathbf{t}\in R_N}D_{\mathbf{r}}D_{\mathbf{t}}                   
(\zeta_{\lambda}(E_{\mathbf{t}}K_{\mathbf{t}}^{-1})\otimes K_{\mathbf{t}}F_{\mathbf{t}})K_{V(\lambda)}\mathcal{K}_{V(\lambda)}(\zeta_{\lambda}(F_{\mathbf{r}})\otimes E_{\mathbf{r}}).\label{eq: GammaV action}
\end{align}
After substituting $\sum_{\mu\in\Pi(\lambda)}P_{\mu}^{V(\lambda)}\otimes K_{\mu}$ for $\mathcal{K}_{V(\lambda)}$ and shifting items, we can get
\begin{align}
\Gamma_{V}=\sum_{\mu\in\Pi(\lambda)}\sum_{\mathbf{r},\mathbf{t}\in R_{N}}A_{\mathbf{r},\mathbf{t}}\zeta_{\lambda}(E_{\mathbf{t}}K_{\mathbf{t}}^{-1}F_{\mathbf{r}})P_{\mu}^{V(\lambda)}\otimes F_{\mathbf{t}}K_{2\mu-2\sum\mathbf{r}+\sum\mathbf{t}}E_{\mathbf{r}},
\end{align}
where $A_{\mathbf{r},\mathbf{t}}$ arises from $D_{\mathbf{r}}$, $D_{\mathbf{t}}$, and the exchange of $F_{\mathbf{t}}$ and $K_{\mathbf{t}}$. In particular, $A_{\mathbf{r},\mathbf{t}}=1$ for $\mathbf{r}=(0,0,\cdots,0)$ and $\mathbf{t}=(0,0,\cdots,0)$,

Given that $C_{\lambda}={\rm Tr_1}((\zeta_{\lambda}(K_{2\rho})\otimes 1)\Gamma_{V(\lambda)})$, we only need to consider the items that contribute to the trace, its necessary condition is $\sum \mathbf{r}=\sum\mathbf{t}$. In addition, for any $v_{\mu}\in V(\lambda)$, with $\mu\in\Pi(\lambda)$, $\zeta_{\lambda}(K_{2\rho})v_{\mu}=q^{(2\rho,\mu)}v_{\mu}.$ Then we can get
\begin{align}\label{eq:key-formula}
C_{\lambda}=\sum_{\mu\in\Pi(\lambda)}q^{(2\rho,\mu)}\sum_{\sum\mathbf{t}=\sum\mathbf{r}}A_{\mathbf{r},\mathbf{t}}{\rm Tr}(\zeta_{\lambda}(E_{\mathbf{t}}K_{\mathbf{t}}^{-1}F_{\mathbf{r}})P_{\mu}^{V(\lambda)})F_{\mathbf{t}}K_{2\mu-\sum\mathbf{r}}E_{\mathbf{r}}.
\end{align}

So far, we have a series of central elements $C_{\lambda}$ obtained from the simple  $\U_q(\fg)$-modules $V(\lambda)$ with $\lambda\in P^+$. We will show in \thmref{mainresult}
that the subset of $C_{\lambda}$ for $\lambda$ being the fundamental weights generate the entire centre of $\U_q(\fg)$.

Recall from \eqref{eq: HCmap} the Harish-Chandra isomorphism $\gamma_{-\rho}\circ\pi$ from $Z(\U_q(\fg))$ to $({\rm U}_{ev}^0)^W$, where ${\rm U}={\rm U}_{q}({\mathfrak{g}})$. By \eqnref{eq:key-formula}, we can immediately get the following proposition.
\begin{proposition}\label{prop: gpiC}
For any $\lambda\in P^+$, we have 
\begin{align}
\label{sec3.answer}
\gamma_{-\rho}\circ\pi (C_{\lambda})=\sum_{\mu\in \Pi(\lambda)}m_{\lambda}(\mu)K_{2\mu}\in ({\rm U}_{ev}^0)^W.
\end{align}
\end{proposition}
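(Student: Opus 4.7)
The plan is to apply $\gamma_{-\rho}\circ\pi$ directly to the explicit expression \eqref{eq:key-formula} for $C_\lambda$ and track which terms survive.

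First I would observe that the summand indexed by $(\mu,\mathbf{r},\mathbf{t})$ in \eqref{eq:key-formula} is
\[
\text{(scalar)}\cdot F_{\mathbf{t}}\,K_{2\mu-\sum\mathbf{r}}\,E_{\mathbf{r}}\ \in\ {\rm U}^-_{-\sum\mathbf{t}}\,{\rm U}^0\,{\rm U}^+_{\sum\mathbf{r}},
\]
and the constraint $\sum\mathbf{r}=\sum\mathbf{t}$ means this term sits in the weight-zero piece ${\rm U}_0$ as required. From the decomposition ${\rm U}_0={\rm U}^0\oplus\bigoplus_{\nu>0}{\rm U}^-_{-\nu}{\rm U}^0{\rm U}^+_{\nu}$ used to define $\pi$, only the terms with $\mathbf{t}=\mathbf{r}=0$ survive the projection $\pi$. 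For this surviving index one has $A_{0,0}=1$, and
\[
\operatorname{Tr}\bigl(\zeta_\lambda(1)\,P_\mu^{V(\lambda)}\bigr)=\dim V(\lambda)_\mu=m_\lambda(\mu),
\]
so that
\[
\pi(C_\lambda)=\sum_{\mu\in\Pi(\lambda)} q^{(2\rho,\mu)}\,m_\lambda(\mu)\,K_{2\mu}.
\]

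Next I would apply $\gamma_{-\rho}$. Since $\gamma_{-\rho}(K_{2\mu})=\chi_{-\rho}(K_{2\mu})K_{2\mu}=q^{(-\rho,2\mu)}K_{2\mu}=q^{-(2\rho,\mu)}K_{2\mu}$, the scalar factor $q^{(2\rho,\mu)}$ cancels cleanly and we obtain
\[
\gamma_{-\rho}\circ\pi(C_\lambda)=\sum_{\mu\in\Pi(\lambda)} m_\lambda(\mu)\,K_{2\mu}.
\]

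Finally, to confirm membership in $({\rm U}_{ev}^0)^W$, I would recall from \secref{sec2.representations} that $\Pi(\lambda)$ is $W$-stable and $m_\lambda(w\mu)=m_\lambda(\mu)$ for all $w\in W$; since the Weyl group acts on ${\rm U}^0$ by $wK_\nu=K_{w\nu}$, the sum is manifestly $W$-invariant, and of course each $K_{2\mu}$ lies in ${\rm U}^0_{ev}$. There is really no obstacle here: the argument is a bookkeeping exercise once one recognises that $\pi$ discards every term except the $\mathbf{r}=\mathbf{t}=0$ diagonal contribution, and that $\gamma_{-\rho}$ is precisely the normalisation that removes the $q^{(2\rho,\mu)}$ arising from $\zeta_\lambda(K_{2\rho})$.
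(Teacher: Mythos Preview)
Your argument is correct and follows essentially the same route as the paper's own proof: both inspect the explicit formula \eqref{eq:key-formula}, observe that the projection $\pi$ kills every summand except the one with $\mathbf{r}=\mathbf{t}=0$, compute $A_{0,0}=1$ and $\operatorname{Tr}(\zeta_\lambda(1)P_\mu^{V(\lambda)})=m_\lambda(\mu)$, and then apply $\gamma_{-\rho}$ to cancel the factor $q^{(2\rho,\mu)}$. Your justification of $W$-invariance at the end is slightly more explicit than the paper's ``clearly belongs to $(\U_{ev}^0)^W$'', but the substance is identical.
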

\begin{proof}
Recall that $\pi$ is an algebra homomorphism from ${\rm U}_0$ to ${\rm U}^0$, the subalgebra of ${\rm U}$ generated by $K_{\lambda}$'s. By inspecting the formula \eqnref{eq:key-formula}, we can easily see that
$\pi(C_{\lambda})$ is the sum of terms in which $\mathbf{r}=(0,0,\cdots,0)$ and $\mathbf{t}=(0,0,\cdots,0)$. In this condition, $A_{\mathbf{r},\mathbf{t}}=1$, and ${\rm Tr}(\zeta_{\lambda}(1)P_{\mu}^{V(\lambda)})=m_{\lambda}(\mu).$ Then we can get that 
\[ \pi(C_{\lambda})=\sum_{\mu\in\Pi(\lambda)}q^{(2\rho,\mu)}m_{\lambda}(\mu)K_{2\mu}.
\]
It then follows from the definition of $\gamma_{-\rho}$ in \eqref{eq: gamma} that
\[
\gamma_{-\rho}\circ \pi(C_{\lambda})=\sum_{\mu\in \Pi(\lambda)}m_{\lambda}(\mu)K_{2\mu},
\]
where the right hand side clearly belongs to $(\U_{ev}^0)^W$.
\end{proof}

\begin{remark}
\propref{prop: gpiC} makes $C_{\lambda}$ with the complicative expression \eqref{eq:key-formula} more clear to us. Moreover, the images of them under the Harish-Chandra isomophism is similar with the character of $V(\lambda)$. This fact make it easily for us to construct the generators of the centre from the generators of $(\U_{ev}^0)^W$. 
\end{remark}

\section{Generators of the centre and their relations}
\label{sec: main-result}
\subsection{Grothendieck group of $\U_q(\fg)$}
In this section, we will write $\U=\U_q(\fg)$.

Let $K(\U)$ be the Grothendieck group over $\F$ of the category ${\rm{Rep}}_f(\U)$ of finite dimensional $\U$-modules. Since ${\rm{Rep}}_f(\U)$ is a tensor category, $K({\rm U})$ is an associative algebra, the   multiplication of which is induced by the tensor product of $\U$-modules. More explicitly, for any objects $V, V'$ in ${\rm{Rep}}_f(\U)$,  we write $[V]$ and $[V']$ for the corresponding elements in $K({\rm U})$. Then $[V][V']=[V\otimes V']$. 

We will prove $K(\U)\cong (\U_{ev}^0)^W$ in \lemref{lem: alge structure}. Thus the algebraic structure and the generators of $K(\U)$ are of crucial importance to us. 
In Lie theory, it is well known that the representation ring $R(\fg)$ for the finite dimensional simple Lie algebra $\fg$ is a polynomial algebra generated by the irreducible representations with highest weights $\varpi_1,\varpi_2,\cdots,\varpi_n$, see \cite[Theorem 23.24]{FuHarris} for details. For $K(\U)$, we have the same result.

\begin{theorem}\label{thm:generators of KU}
Let $\varpi_1,\varpi_2,\cdots,\varpi_n$ be the fundamental weights of $\fg$. Then $K(\U)$ is a polynomial algebra over $\F$ in variables $[V(\varpi_1)],[V(\varpi_2)],\cdots, [V(\varpi_n)]$, where $n$ is the rank of $\fg$.
\end{theorem}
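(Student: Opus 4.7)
The plan is to mirror the classical proof that the representation ring $R(\fg)$ of a simple Lie algebra is a polynomial ring in its fundamental representations (Fulton–Harris, Theorem 23.24), transplanted to the quantum setting where the finite-dimensional type $\mathbf{1}$ representation theory of $\U$ at generic $q$ parallels that of $\fg$.

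First, I would fix an $\F$-basis of $K(\U)$. The category of finite-dimensional type $\mathbf{1}$ $\U$-modules is semisimple with the $V(\lambda)$, $\lambda \in P^+$, exhausting the simple objects up to isomorphism, so $\{[V(\lambda)] : \lambda \in P^+\}$ is an $\F$-basis of $K(\U)$. Next, I would introduce the algebra homomorphism
\[
\phi : \F[y_1, \ldots, y_n] \longrightarrow K(\U), \qquad \phi(y_i) = [V(\varpi_i)],
\]
and prove that $\phi$ is an isomorphism by exhibiting an explicit unitriangular change of basis.

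For $\lambda = \sum_{i=1}^n a_i \varpi_i \in P^+$, set $M_\lambda := V(\varpi_1)^{\otimes a_1} \otimes \cdots \otimes V(\varpi_n)^{\otimes a_n}$. The tensor of highest weight vectors is a weight vector of weight $\lambda$ annihilated by every $E_i$, generating a submodule isomorphic to $V(\lambda)$; every other weight of $M_\lambda$ lies in $\lambda - Q^+$ and hence satisfies $\mu < \lambda$ in the dominance order on $P$. Decomposing $M_\lambda$ into simples therefore gives
\[
\phi(y_1^{a_1} \cdots y_n^{a_n}) \;=\; [M_\lambda] \;=\; [V(\lambda)] + \sum_{\mu \in P^+,\; \mu < \lambda} n_{\mu}^{\lambda}\, [V(\mu)]
\]
for non-negative integers $n_\mu^\lambda$. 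Since the set $\{\mu \in P^+ : \mu \leq \lambda\}$ is finite, one obtains a well-founded induction on dominance: the matrix expressing the images $\phi(y_1^{a_1} \cdots y_n^{a_n})$ in the basis $\{[V(\lambda)]\}$ is unitriangular with respect to any linear extension of the dominance order, hence invertible. This simultaneously establishes surjectivity (each $[V(\lambda)]$ lies in the image of $\phi$) and injectivity (the preimages of a basis are $\F$-linearly independent, so the monomial basis of $\F[y_1,\ldots,y_n]$ maps to an $\F$-basis of $K(\U)$).

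The one step that requires genuine content, and is the main thing to be careful about, is the claim that the highest-weight component of $M_\lambda$ contributes $V(\lambda)$ with multiplicity exactly one and that all other simple constituents have strictly smaller highest weights. Semisimplicity of $\mathrm{Rep}_f(\U)$ reduces this to a weight-space count: the weight $\lambda$ occurs in $M_\lambda$ only from the tensor of highest weight vectors, giving a one-dimensional $\lambda$-weight space, and all other weights are of the form $\lambda - \sum c_i \alpha_i$ with $c_i \geq 0$. Both facts are standard consequences of the analogue of weight decomposition discussed in \secref{sec2.representations}, so the argument goes through cleanly once semisimplicity of the finite-dimensional type $\mathbf{1}$ modules is invoked.
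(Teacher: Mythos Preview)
Your proposal is correct and follows essentially the same route as the paper: both arguments rest on the decomposition $\bigotimes_i V(\varpi_i)^{\otimes a_i} \cong V(\lambda)\oplus\bigoplus_{\mu<\lambda} m_\mu V(\mu)$, from which generation and algebraic independence are read off. The only difference is packaging: the paper separates the two conclusions into an inductive surjectivity lemma (\lemref{lem: Cla}) and an independence lemma proved via a lexicographic-order contradiction (\lemref{lem: algind}), whereas you bundle both into a single unitriangularity statement over the dominance order, which is a bit cleaner but not a genuinely different argument.
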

We will take some steps to prove the \thmref{thm:generators of KU}. It is easy to verify the following lemma.
\begin{lemma}\label{lem: basis of KU}
The elements $[V(\lambda)]$ with $\lambda\in P^+$ form a basis of $K(\U)$.
\end{lemma}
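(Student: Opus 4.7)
The plan is to establish spanning and linear independence separately, via a composition-series argument and a highest-weight argument, respectively.

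For spanning, I would argue by induction on $\dim V$ for $V\in \mathrm{Rep}_f(\U)$. Since, as recorded earlier, every finite dimensional $\U$-module of type $\mathbf{1}$ decomposes as a direct sum of its weight spaces, and a nonzero one always contains a maximal weight vector, one obtains a nontrivial submodule isomorphic to a quotient of a Verma module; hence any finite dimensional $V$ either is simple or admits a proper nonzero submodule. If $0 \to V' \to V \to V'' \to 0$ is any short exact sequence with $V', V'' \ne 0$, then in $K(\U)$ we have $[V] = [V'] + [V'']$, and by induction both $[V']$ and $[V'']$ lie in $\sum_{\lambda\in P^+} \F\,[V(\lambda)]$. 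For simple $V$, the classification of finite dimensional simple type $\mathbf{1}$ modules recalled in \secref{sec2.representations} identifies $V$ with some $V(\lambda)$, $\lambda\in P^+$. So $\{[V(\lambda)]\}_{\lambda\in P^+}$ spans $K(\U)$ over $\F$.

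For linear independence, I would exploit the highest weight of $V(\lambda)$, which requires only the partial dominance order on $P$. Suppose
\[
\sum_{\lambda\in S} c_\lambda [V(\lambda)] = 0 \quad \text{in } K(\U),
\]
where $S\subset P^+$ is a finite set and all $c_\lambda\in \F^{\times}$. Because the weight-space multiplicities $m_V(\mu)=\dim V_\mu$ are additive on short exact sequences, they define functions $K(\U)\to \Z$ for each $\mu\in P$. Applied to the above relation,
\[
\sum_{\lambda \in S} c_\lambda\, m_\lambda(\mu) = 0 \qquad \text{for every } \mu\in P.
\]
Now pick $\lambda_0\in S$ maximal with respect to the dominance order on $P^+$. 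Every weight $\mu$ of $V(\lambda)$ satisfies $\mu \le \lambda$; by the maximality of $\lambda_0$, the weight $\lambda_0$ cannot occur in any $V(\lambda)$ with $\lambda \in S\setminus\{\lambda_0\}$. Using $m_{\lambda_0}(\lambda_0)=1$, the equation for $\mu=\lambda_0$ collapses to $c_{\lambda_0}=0$, a contradiction.

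I do not expect any real obstacle, since both ingredients are standard and are explicitly available in the paper: existence of composition series follows from finite dimensionality together with the classification of simple type $\mathbf{1}$ modules as $V(\lambda)$, and the highest-weight/dominance-order argument for independence rests only on $m_\lambda(\lambda)=1$ and $\Pi(\lambda)\subseteq \lambda-Q^+$, both recalled in \secref{sec2.representations}. The only point to be careful about is to make the weight-multiplicity maps genuinely descend to $K(\U)$, which is immediate from additivity on short exact sequences of finite dimensional modules.
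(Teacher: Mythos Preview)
Your argument is correct. The paper does not actually supply a proof of this lemma, merely asserting that it is easy to verify; your composition-series argument for spanning and highest-weight argument for linear independence are precisely the standard verification one would expect.
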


Endow $P$ with the standard partial order such that $\mu \leq \lambda$ if and only if  $\lambda-\mu$ is a  non-negative integral linear combination of positive roots. Next we prove that for any $\lambda\in P^+$, $[V(\lambda)]$ is generated by $[V(\varpi_1)]$,$[V(\varpi_2)]$,$\cdots$, $[V(\varpi_n)]$, and $[V(\varpi_1)],[V(\varpi_2)],\cdots, [V(\varpi_n)]$ are algebraically independent. 

\begin{lemma}\label{lem: Cla}
There exists a  polynomial $f_{\lambda}\in \F[x_1,x_2,\dots, x_n]$ such that
\[ [V(\lambda)]=f_{\lambda}([V(\varpi_1)], [V(\varpi_2)], \dots, [V(\varpi_n)]), \quad \forall \lambda\in P^+. \]
\end{lemma}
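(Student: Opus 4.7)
The plan is to induct on the highest weight $\lambda$ with respect to the standard partial order $\leq$ on $P^+$, using the fact that a tensor product of fundamental modules realises $V(\lambda)$ as a top summand in a triangular way.

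First I would observe that \lemref{lem: basis of KU} gives a basis $\{[V(\mu)] : \mu \in P^+\}$ of $K(\U)$, and that for each fixed $\lambda \in P^+$ the set $\{\mu \in P^+ : \mu \leq \lambda\}$ is finite (since $\lambda - \mu \in Q^+$ has finitely many dominant splittings). This finiteness makes downward induction on $\leq$ well-founded.

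For the induction step, write $\lambda = \sum_{i=1}^n a_i \varpi_i$ with $a_i \in \N$, and consider the tensor product
\[
T_\lambda := V(\varpi_1)^{\otimes a_1} \otimes V(\varpi_2)^{\otimes a_2} \otimes \cdots \otimes V(\varpi_n)^{\otimes a_n}.
\]
The key facts I need are: (i) every weight $\eta$ of $T_\lambda$ satisfies $\eta \leq \lambda$, since the weights of $V(\varpi_i)$ all lie below $\varpi_i$ in the dominance order; (ii) the weight $\lambda$ itself appears in $T_\lambda$ with multiplicity exactly $1$, coming from the tensor product of the individual highest weight vectors. These follow from the weight-theoretic structure of the finite dimensional simples of $\U$ discussed in \secref{sec2.representations}. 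Consequently, when we decompose $T_\lambda$ into simples in $K(\U)$, the coefficient of $[V(\lambda)]$ is $1$ and every other $[V(\mu)]$ that appears has $\mu < \lambda$ dominant, giving
\[
[V(\varpi_1)]^{a_1}[V(\varpi_2)]^{a_2}\cdots[V(\varpi_n)]^{a_n} = [V(\lambda)] + \sum_{\mu \in P^+,\, \mu < \lambda} c_\mu^\lambda [V(\mu)], \qquad c_\mu^\lambda \in \F.
\]

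By the inductive hypothesis applied to each $\mu < \lambda$ in $P^+$ (of which there are only finitely many), we already have polynomials $f_\mu \in \F[x_1, \ldots, x_n]$ realising $[V(\mu)]$. Solving the displayed equation for $[V(\lambda)]$ and substituting then yields
\[
f_\lambda(x_1, \ldots, x_n) := x_1^{a_1} x_2^{a_2} \cdots x_n^{a_n} - \sum_{\mu < \lambda} c_\mu^\lambda f_\mu(x_1, \ldots, x_n),
\]
which has the required property. The base case is $\lambda = 0$, where $V(0)$ is trivial and $f_0 = 1$.

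The main obstacle is the triangularity assertion (i)--(ii); everything else is bookkeeping. This is really a fact about the weight combinatorics of finite dimensional simple $\U$-modules, which mirrors the classical Lie algebra case since the weight multiplicities of $V(\lambda)$ are the same as those of the corresponding simple $\fg$-module (type $\mathbf{1}$ modules, as noted after the definition of weight spaces in \secref{sec2.representations}). I expect no serious issue here, only the need to invoke this standard input cleanly.
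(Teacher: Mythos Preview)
Your proposal is correct and follows essentially the same argument as the paper: induct on $\lambda$ in the dominance order, realise $V(\lambda)$ as the unique top summand of $\bigotimes_i V(\varpi_i)^{\otimes a_i}$, and solve for $[V(\lambda)]$ to obtain $f_\lambda = x_1^{a_1}\cdots x_n^{a_n} - \sum_{\mu<\lambda} c_\mu^\lambda f_\mu$. Your version is slightly more explicit about the finiteness of $\{\mu\in P^+:\mu\le\lambda\}$ and about why the triangularity (i)--(ii) holds, but there is no substantive difference.
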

\begin{proof}
 We use induction on $\lambda$. It is trivial when $\lambda=0$. Suppose that $\lambda=\sum_{i=1}^n k_i\varpi_i$, then the irreducible representation $V(\lambda)$ is contained in the tensor product $\bigotimes_{i=1}^n  V(\varpi_i)^{\otimes k_i}$ with multiplicity 1. This tensor product can be decomposed as
\[
\bigotimes_{i=1}^n  V(\varpi_i)^{\otimes k_i}= V(\lambda)\oplus \left(\bigoplus_{\mu \in P^+, \,\mu<\lambda} m_{\mu}V(\mu)\right),
\]
where $m_{\mu} \in \N$ denotes the multiplicity of $V(\mu)$. 
By induction, there exist $f_{\mu}\in \F[x_1\dots, x_n]$ such that $[V(\mu)]=f_{\mu}([V(\varpi_1)], [V(\varpi_2)], \dots, [V(\varpi_n)]).$ 
Therefore, we have $f_{\lambda}=x_1^{k_1}x_2^{k_2}\dots x_n^{k_n}- \sum_{\mu \in P^+, \,\mu< \lambda} m_{\mu} f_{\mu}$.
\end{proof}

Recall that any integral dominant weight $\lambda$ is a non-negative linear combination of fundamental weights, that is, $\lambda=\sum_{i=1}^n k_i\varpi_i$ for $k_i\in \N$. We can define the lexicographic order $\prec$ on  $ P^{+}=\bigoplus_{i=1}^n \N\varpi_i$.
  
\begin{lemma}\label{lem: algind}
  The elements $[V(\varpi_1)], [V(\varpi_2)], \dots, [V(\varpi_n)]$ are algebraically independent over $\F$. As a consequence, the polynomial $f_{\lambda}$ as defined in \lemref{lem: Cla} is unique.
\end{lemma}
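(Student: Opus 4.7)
The plan is to argue by contradiction using a leading-term argument, with the dominance partial order on $P^+$ serving as the order on leading monomials. Suppose there exists a nonzero polynomial $p(x_1,\dots,x_n) = \sum_{\mathbf{k} \in \N^n} c_{\mathbf{k}}\, x_1^{k_1}\cdots x_n^{k_n}$ in $\F[x_1,\dots,x_n]$ with $p([V(\varpi_1)], \dots, [V(\varpi_n)]) = 0$ in $K(\U)$, let $S = \{\mathbf{k} \in \N^n : c_{\mathbf{k}} \neq 0\}$ (a finite set), and identify each $\mathbf{k}$ with $\lambda(\mathbf{k}) := \sum_{i=1}^n k_i\varpi_i \in P^+$.

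First I would record a slightly sharper form of \lemref{lem: Cla}: for every $\mathbf{k} \in \N^n$ one has
\begin{align*}
[V(\varpi_1)]^{k_1}\cdots [V(\varpi_n)]^{k_n} = [V(\lambda(\mathbf{k}))] + \sum_{\mu \in P^+,\; \mu < \lambda(\mathbf{k})} m^{\mathbf{k}}_{\mu}\, [V(\mu)],
\end{align*}
where $<$ is the dominance partial order on $P^+$ (i.e.\ $\mu < \lambda$ iff $\lambda-\mu \in Q^+\setminus\{0\}$) and the coefficient of $[V(\lambda(\mathbf{k}))]$ is exactly $1$. This sharper form is essentially already present in the proof of \lemref{lem: Cla}, coming from the fact that $V(\lambda(\mathbf{k}))$ appears with multiplicity one in $\bigotimes_i V(\varpi_i)^{\otimes k_i}$ and that every other simple constituent has strictly smaller highest weight in the dominance order.

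Next, because $S$ is finite, the set $\{\lambda(\mathbf{k}) : \mathbf{k} \in S\}$ admits a maximal element $\lambda_0 := \lambda(\mathbf{k}_0)$ with respect to the dominance partial order. Expanding the relation $0 = p([V(\varpi_1)], \dots, [V(\varpi_n)])$ in the basis $\{[V(\mu)] : \mu \in P^+\}$ supplied by \lemref{lem: basis of KU} and reading off the coefficient of $[V(\lambda_0)]$ then gives
\begin{align*}
0 = c_{\mathbf{k}_0} + \sum_{\mathbf{k} \in S\setminus\{\mathbf{k}_0\}} c_{\mathbf{k}}\, m^{\mathbf{k}}_{\lambda_0}.
\end{align*}
For $\mathbf{k} \in S\setminus\{\mathbf{k}_0\}$ the coefficient $m^{\mathbf{k}}_{\lambda_0}$ is nonzero only when $\lambda_0 < \lambda(\mathbf{k})$ strictly in the dominance order, which is forbidden by the maximality of $\lambda_0$ in $\{\lambda(\mathbf{k}) : \mathbf{k}\in S\}$. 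Hence $c_{\mathbf{k}_0} = 0$, contradicting $\mathbf{k}_0 \in S$. The uniqueness of $f_\lambda$ is then immediate: two candidate polynomials would differ by an element of the kernel of the evaluation map $\F[x_1,\dots,x_n] \to K(\U)$, which is trivial by the algebraic independence just established.

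The main subtlety -- and the reason I would \emph{not} run the leading-term argument with the lexicographic order $\prec$ on $P^+$ directly -- is that $\prec$ does not in general refine the dominance order under the identification $\mathbf{k} \leftrightarrow \sum k_i\varpi_i$: the simple roots $\alpha_i$ typically have mixed-sign coordinates in the fundamental weight basis, so a $\prec$-maximal index in $S$ need not be dominance-maximal, and the non-leading contributions to the $[V(\lambda_0)]$-coefficient could fail to cancel. Working directly with the dominance partial order on the finite set $\{\lambda(\mathbf{k}) : \mathbf{k} \in S\}$, where a maximal element exists automatically, avoids this issue and is really the only ingredient beyond \lemref{lem: basis of KU} and the sharpened form of \lemref{lem: Cla}.
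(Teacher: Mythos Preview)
Your argument is correct, and in fact more careful than the paper's own proof. The paper runs exactly the leading-term argument you warn against: it fixes the lexicographic order $\prec$ on $\N^n$, picks the $\prec$-maximal monomial $x_1^{k_1}\cdots x_n^{k_n}$ in a hypothetical relation, and asserts that $[V(\lambda)]$ with $\lambda=\sum k_i\varpi_i$ never appears in $[V(\varpi_1)]^{a_1}\cdots[V(\varpi_n)]^{a_n}$ for $(a_1,\dots,a_n)\prec(k_1,\dots,k_n)$. Your objection to this is well founded. Already for $\mathfrak{sl}_3$ one has $V(\varpi_2)^{\otimes 2}\cong V(2\varpi_2)\oplus V(\varpi_1)$, so $[V(\varpi_1)]$ \emph{does} appear in $[V(\varpi_2)]^2$ even though $(0,2)\prec(1,0)$; the point is that $2\varpi_2-\varpi_1=\alpha_2\in Q^+$, illustrating precisely the mixed-sign phenomenon you describe. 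Thus the paper's assertion is not literally true, and its proof has a gap at that step. Your replacement---choosing a dominance-maximal element of the finite set $\{\lambda(\mathbf{k}):\mathbf{k}\in S\}$ and reading off the coefficient of $[V(\lambda_0)]$---repairs this cleanly, since the decomposition of $\bigotimes_i V(\varpi_i)^{\otimes k_i}$ really is controlled by the dominance order. The uniqueness of $f_\lambda$ then follows as you say.
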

  \begin{proof}
  Assume for contradiction  that in $\F[x_1,\dots, x_n]$ there exists
  \[ f=c_{k_1,\dots, k_n}x_1^{k_1}x_2^{k_2}\dots x_n^{k_n} + \sum_{(a_1,\dots,a_n) \prec (k_1, \dots,k_n)} c_{a_1,\dots, a_n}x_1^{a_1}x_2^{a_2}\dots x_n^{a_n} \]
  such that $f([V(\varpi_1)],[V(\varpi_2)],\cdots,[V(\varpi_n)])=0$, where all monomials in $f$ are arranged lexicographically such that $x_1^{k_1}x_2^{k_2}\dots x_n^{k_n}$ is the maximal one and $c_{k_1,\dots, k_n}\neq 0$. Note that $[V(\varpi_1)]^{k_1}[V(\varpi_2)]^{k_2}\cdots[V(\varpi_n)]^{k_n}=[\bigotimes_{i=1}^nV(\varpi_i)^{\otimes k_i}]$.
   We express this as a linear combination of the basis elements $[V(\mu)]$ with $\mu\in P^+$, then $[V(\lambda)]$ with $\lambda=\sum_{i=1}^n k_i\varpi_i$ has coefficient $1$. However,
  $[V(\lambda)]$ never appears in $[V(\varpi_1)]^{a_1}[V(\varpi_2)]^{a_2}\cdots[V(\varpi_n)]^{a_n}$ for any
  $(a_1,\dots,a_n) \prec (k_1, \dots,k_n)$.
  Therefore, $[V(\lambda)]$ appears in $f([V(\varpi_1)],[V(\varpi_2)],\cdots,[V(\varpi_n)])$ with coefficient $c_{k_1,\dots, k_n}\neq 0$, contradicting $f([V(\varpi_1)],[V(\varpi_2)],\cdots,[V(\varpi_n)])=0$.
  \end{proof}

\begin{proof}[proof of \thmref{thm:generators of KU}]
By \lemref{lem: basis of KU}, \lemref{lem: Cla}, \lemref{lem: algind}, we can immediately complete the proof.
\end{proof}

\subsection{Explict generators of the centre}
The following theorem is the main result of this paper. 

Recall that $C_{\lambda}=\mathcal{C}_{\lambda}^{1}$ is defined in \eqnref{eq: Celmt}, which has the explicit expression in \eqnref{eq:key-formula}, and $\varpi_1,\varpi_2,\cdots,\varpi_n$ are the fundamental weights of $\mathfrak{g}$.
\begin{theorem}\label{mainresult}
The centre $Z({\rm U})$ of ${\rm U}_q(\fg)$ is the polynomial algebra over $\F$ in the variables $C_{\varpi_1}, C_{\varpi_2},\dots, C_{\varpi_n}$, i.e., $Z({\rm U})\cong\F[C_{\varpi_1}, C_{\varpi_2},\dots, C_{\varpi_n}]$.
\end{theorem}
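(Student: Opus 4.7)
The plan is to combine the Harish-Chandra isomorphism of \thmref{thm:HCiso} with the polynomial description of the Grothendieck ring $K(\U)$ given in \thmref{thm:generators of KU}. By \propref{prop: gpiC}, the image of $C_{\lambda}$ under $\gamma_{-\rho}\circ\pi$ agrees with the ``$K$-character''
\[
\chi([V(\lambda)]) := \sum_{\mu \in \Pi(\lambda)} m_{\lambda}(\mu)\, K_{2\mu}.
\]
The key intermediate step is to show that $\chi$ extends to an algebra isomorphism $K(\U) \xrightarrow{\sim} (\U_{ev}^0)^W$ (this is the promised \lemref{lem: alge structure}); granting it, \thmref{mainresult} will follow by transporting structure through \thmref{thm:HCiso}.

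To construct $\chi$, extend the formula above $\F$-linearly to $K(\U)$. Well-definedness on the Grothendieck group is the additivity of weight multiplicities on short exact sequences; the image lies in $(\U_{ev}^0)^W$ by the Weyl-symmetry $m_{\lambda}(\mu)=m_{\lambda}(w\mu)$ recorded in \secref{sec2.representations}; and multiplicativity $\chi([V][V'])=\chi([V])\chi([V'])$ is the tensor-product weight decomposition $(V\otimes V')_{\mu+\nu} \supseteq V_\mu\otimes V'_\nu$.

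The principal technical point is bijectivity of $\chi$. For injectivity, on the basis $\{[V(\lambda)]\}_{\lambda\in P^+}$ of $K(\U)$ provided by \lemref{lem: basis of KU}, the element $\chi([V(\lambda)])$ has leading term $K_{2\lambda}$ with all other summands corresponding to weights strictly below $\lambda$ in the partial order; hence the $\chi([V(\lambda)])$ are $\F$-linearly independent. For surjectivity, $(\U_{ev}^0)^W$ has an $\F$-basis consisting of the Weyl-orbit sums $m^{\mathrm{orb}}_{\lambda}:=\sum_{\mu\in W\lambda} K_{2\mu}$ indexed by $\lambda\in P^+$, and the transition matrix from $\{\chi([V(\lambda)])\}$ to $\{m^{\mathrm{orb}}_{\lambda}\}$ is upper unitriangular with respect to the dominance order. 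This is the same Kostant-style triangularity underlying the classical fact that characters form a basis of $W$-invariant regular functions on the maximal torus, and it shows at once that $\chi$ is bijective. This surjectivity (equivalently, the triangular change of basis) is the only substantive obstacle; everything else is bookkeeping.

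Granted the isomorphism $\chi$, the conclusion is immediate. By \thmref{thm:generators of KU}, $K(\U) = \F[[V(\varpi_1)],\ldots,[V(\varpi_n)]]$ is a polynomial algebra, so $\chi$ transports this to $(\U_{ev}^0)^W = \F[\chi([V(\varpi_1)]),\ldots,\chi([V(\varpi_n)])]$. By \propref{prop: gpiC}, $\chi([V(\varpi_i)]) = \gamma_{-\rho}\circ\pi(C_{\varpi_i})$, so $(\U_{ev}^0)^W$ is polynomial in the Harish-Chandra images of $C_{\varpi_1},\ldots,C_{\varpi_n}$. Pulling back through the isomorphism of \thmref{thm:HCiso} yields $Z(\U) \cong \F[C_{\varpi_1},\ldots,C_{\varpi_n}]$, as required.
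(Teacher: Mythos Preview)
Your proposal is correct and follows essentially the same route as the paper: reduce via the Harish-Chandra isomorphism (\thmref{thm:HCiso}) to showing that $(\U_{ev}^0)^W$ is polynomial in the $\gamma_{-\rho}\circ\pi(C_{\varpi_i})$, then identify these images with ${\rm Ch}([V(\varpi_i)])$ via \propref{prop: gpiC} and conclude from \thmref{thm:generators of KU} together with the isomorphism ${\rm Ch}:K(\U)\to(\U_{ev}^0)^W$ of \lemref{lem: alge structure}. Your unitriangularity argument for the bijectivity of $\chi$ is in fact a slightly cleaner packaging of the paper's inductive proof of \lemref{lem: alge structure}.
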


\begin{proof}
By the quantised Harish-Chandra isomorphism in Theorem \ref{thm:HCiso}, $Z({\rm U})$ is isomorphic to $(\U_{ev}^0)^W$ as algebras. Thus \thmref{mainresult} can be proven by showing that $(\U_{ev}^0)^W$ is a polynomial algebra over $\F$ in the variables  $\gamma_{-\rho}\circ \pi(C_{\varpi_i})$ for $1\leq i\leq n$. This is proven in Corollary \ref{cor:poly} below.
\end{proof}

The remainder of this section is devoted to the proof of Corollary \ref{cor:poly}. This will be done by establishing a series of lemmas.
Now the following result is clear.
\begin{lemma}\label{lem:basis-1}
Set for any $\lambda\in P$, $${\rm av}(\lambda)=\sum_{\mu\in W\lambda}K_{2\mu},$$ then ${\rm av}(\lambda)$ for all $\lambda\in P^+$ form a basis of $(\U_{ev}^0)^W.$
\end{lemma}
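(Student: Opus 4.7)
The plan is to identify $\U_{ev}^0$ with the group algebra $\F[2P]$ (viewing $\{K_{2\lambda}\}_{\lambda\in P}$ as a group-like $\F$-basis), under which the $W$-action becomes the permutation action $w\cdot K_{2\lambda}=K_{2w\lambda}$ on basis vectors. Two standard facts then make the lemma almost immediate:

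\textbf{Orbit representatives.} Every $W$-orbit in $P$ contains exactly one dominant weight; this is the standard fact that $P^+$ is a fundamental domain for the action of $W$ on $P$ (see, e.g., \cite[\S 10.3]{Humpreys}). In particular, as $\lambda$ ranges over $P^+$, the orbits $W\lambda$ partition $P$.

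\textbf{Spanning.} Given $h\in \U_{ev}^0$, write it uniquely as a finite sum $h=\sum_{\mu\in P} c_\mu K_{2\mu}$ with $c_\mu\in\F$. Since $W$ permutes the basis $\{K_{2\mu}\}_{\mu\in P}$ via $wK_{2\mu}=K_{2w\mu}$, the condition $wh=h$ for all $w\in W$ is equivalent to $c_{w\mu}=c_\mu$ for all $w\in W$ and $\mu\in P$, i.e., the coefficient function $\mu\mapsto c_\mu$ is constant on $W$-orbits. Using the orbit decomposition above, I can regroup
\begin{equation*}
h=\sum_{\lambda\in P^+} c_\lambda \sum_{\mu\in W\lambda} K_{2\mu}=\sum_{\lambda\in P^+} c_\lambda\,\mathrm{av}(\lambda),
\end{equation*}
showing that $\{\mathrm{av}(\lambda):\lambda\in P^+\}$ spans $(\U_{ev}^0)^W$.

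\textbf{Linear independence.} For distinct $\lambda,\lambda'\in P^+$, the orbits $W\lambda$ and $W\lambda'$ are disjoint, so $\mathrm{av}(\lambda)$ and $\mathrm{av}(\lambda')$ are supported on disjoint subsets of the basis $\{K_{2\mu}\}_{\mu\in P}$. Hence a finite linear relation $\sum_{\lambda\in P^+} a_\lambda\,\mathrm{av}(\lambda)=0$ forces every coefficient $a_\lambda K_{2\mu}$ (for $\mu\in W\lambda$) to vanish, so all $a_\lambda=0$.

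There is no real obstacle here: the only point requiring any care is to confirm that the $W$-action on $\U^0$ given by $wK_\nu=K_{w\nu}$ preserves the even subalgebra $\U_{ev}^0$, which is clear since $W$ stabilises $2P$. Everything else is a routine invariant-theory argument for the permutation action of a finite group on a group algebra.
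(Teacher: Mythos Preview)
Your argument is correct and is exactly the standard orbit-sum reasoning one would use here; the paper itself offers no proof, merely declaring the lemma ``clear'' before stating it. Your write-up thus supplies precisely the routine details the paper leaves implicit.
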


\begin{lemma}\label{lem: alge structure}
There is an algebra isomorphism 
\begin{eqnarray}\label{eq:Ch}
{\rm Ch}: K(\U)\longrightarrow (\U_{ev}^0)^W,\quad {\rm Ch}([V])=\sum_{\mu\in\Pi(V)}{\rm dim}V_{\mu}K_{2\mu},
\end{eqnarray}
where $\Pi(V)$ is the set of weights of $V$ and ${\rm dim}V_{\mu}$ is the dimension of the weight space $V_{\mu}$.
\end{lemma}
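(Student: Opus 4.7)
The plan is to verify that ${\rm Ch}$ is a well-defined algebra homomorphism, and then to invoke a triangularity argument relative to the two natural bases of $K(\U)$ and $(\U_{ev}^0)^W$ indexed by dominant weights to conclude that ${\rm Ch}$ is a bijection.

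First, I would verify well-definedness. Every finite-dimensional type $\mathbf{1}$ module $V$ of $\U$ decomposes into weight spaces $V = \bigoplus_{\mu\in P} V_\mu$, and weight-space dimensions are additive on short exact sequences of such modules, so the assignment descends to the Grothendieck group $K(\U)$. To see that the image lands in $(\U_{ev}^0)^W$, it suffices to observe (as recorded in \secref{sec2.representations}) that the weight multiplicities $m_\lambda(\mu)$ of any simple module $V(\lambda)$ satisfy $m_\lambda(w\mu) = m_\lambda(\mu)$ for all $w\in W$; combined with the additivity on composition factors, this gives $W$-invariance of ${\rm Ch}([V])$ for every $V$.

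Next, I would check multiplicativity. The standard weight decomposition of a tensor product,
\[
(V\otimes V')_{\mu} = \bigoplus_{\mu_1+\mu_2=\mu} V_{\mu_1}\otimes V'_{\mu_2},
\]
gives $\dim(V\otimes V')_\mu = \sum_{\mu_1+\mu_2=\mu} \dim V_{\mu_1}\dim V'_{\mu_2}$, and hence
\[
{\rm Ch}([V])\,{\rm Ch}([V']) = \sum_{\mu_1,\mu_2} \dim V_{\mu_1}\,\dim V'_{\mu_2}\,K_{2(\mu_1+\mu_2)} = {\rm Ch}([V\otimes V']) = {\rm Ch}([V][V']),
\]
so ${\rm Ch}$ respects products.

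For bijectivity, I would compare bases. By \lemref{lem: basis of KU} the classes $\{[V(\lambda)] : \lambda\in P^+\}$ form an $\F$-basis of $K(\U)$, and by \lemref{lem:basis-1} the orbit sums $\{{\rm av}(\lambda) : \lambda\in P^+\}$ form an $\F$-basis of $(\U_{ev}^0)^W$. Because each $W$-orbit in $P$ contains a unique dominant representative and $m_\lambda$ is $W$-invariant, grouping the weights of $V(\lambda)$ by their $W$-orbits yields
\[
{\rm Ch}([V(\lambda)]) = \sum_{\nu\in \Pi(\lambda)\cap P^+} m_\lambda(\nu)\,{\rm av}(\nu).
\]
Since every weight of $V(\lambda)$ satisfies $\mu \leq \lambda$ in the dominance order and $m_\lambda(\lambda) = 1$, this expansion takes the form ${\rm av}(\lambda) + \sum_{\nu\in P^+,\ \nu < \lambda} m_\lambda(\nu)\,{\rm av}(\nu)$. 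The transition matrix between the two bases is thus unitriangular with respect to the dominance partial order on $P^+$, hence invertible, so ${\rm Ch}$ is an isomorphism.

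No single step presents a genuine obstacle; the only point requiring mild care is the bookkeeping in the last step, namely confirming that the dominant weights appearing in $\Pi(\lambda)$ are strictly $\leq \lambda$ with the maximum $\lambda$ occurring with multiplicity one, but this is standard from the weight theory of $V(\lambda)$.
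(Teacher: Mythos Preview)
Your proof is correct, and it rests on the same core observation as the paper's: the triangularity ${\rm Ch}([V(\lambda)]) = {\rm av}(\lambda) + \sum_{\nu\in P^+,\ \nu<\lambda} m_\lambda(\nu)\,{\rm av}(\nu)$ relative to the dominance order. The packaging differs, however. You deduce bijectivity in one stroke from the unitriangularity of the transition matrix between the bases $\{[V(\lambda)]\}$ and $\{{\rm av}(\lambda)\}$. The paper instead treats injectivity and surjectivity separately: injectivity by noting that equality of characters forces equality in $K(\U)$, and surjectivity by an upward induction showing each ${\rm av}(\lambda)$ lies in the image, where the inductive step passes through the tensor product $\bigotimes_i V(\varpi_i)^{\otimes k_i}$. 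That detour through tensor products is not needed for the present lemma (it is really the engine behind \thmref{thm:generators of KU}); your direct triangularity argument is more economical here. The only point to make fully explicit in your write-up is that for each $\lambda\in P^+$ the set $\{\nu\in P^+ : \nu\le\lambda\}$ is finite, so that unitriangularity over the infinite index set $P^+$ genuinely yields an inverse.
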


\begin{proof}
It is easy to verify that $\rm Ch$ is an algebra homomorphism.
Firstly, we prove that it is injective.
For any $[V],[W]\in K(\U)$ such that ${\rm Ch}([V])={\rm Ch}([W])$, by the definition of $\rm Ch$, we have $$\sum_{\mu_1\in\Pi(V)}{\rm dim}V_{\mu_1}K_{2\mu_1}=\sum_{\mu_2\in\Pi(W)}{\rm dim}W_{\mu_2}K_{2\mu_2}.$$
Since $K_{\mu}$ with $\mu\in P$ are linear independent, then we can get that $\Pi(V)=\Pi(W)$, and for any $\mu\in\Pi(V)$, ${\rm dim}V_{\mu}={\rm dim}W_{\mu}$, thus $[W]=[V]$.

Next we prove it is surjective. By \lemref{lem:basis-1}, ${\rm av}(\lambda)$ for all $\lambda\in P^+$ form a basis of $(\U_{ev}^0)^W$. We need to show that all these ${\rm av}(\lambda)$ are in the image of $\rm Ch$. Note that $P$ is endowed with the standard partial order that $\mu \leq \lambda$ if and only if  $\lambda-\mu$ is a  non-negative integral linear combination of positive roots.
We use upward induction on the partial ordering of $P^+$. Starting with $\lambda$ minimal, i.e., no other $\mu\in P^+$ can occur as a weight of $V(\lambda)$, then we have ${\rm Ch}([V(\lambda)])={\rm av}(\lambda)$.
Suppose that $\lambda=\sum_{i=1}^n k_i\varpi_i$, then the irreducible representation $V(\lambda)$ is contained in the tensor product $\bigotimes_{i=1}^n  V(\varpi_i)^{\otimes k_i}$ with multiplicity 1. This tensor product can decomposes as the direct sum of $V(\mu)$ with $\mu\leq\lambda$.
Recall that $\Pi(\mu)$ is the set of weights of $V(\mu)$, which is $W$-invariant, and ${\rm dim}V_{\nu}={\rm dim}V_{w\nu}$ for any $\nu\in\Pi(\mu)$, $w\in W$. Then we can get 
\begin{align*}
\bigotimes_{i=1}^n  V(\varpi_i)^{\otimes k_i}=\left(\bigoplus_{w\in W}V_{w\lambda}\right)\oplus\left(\bigoplus_{w\in W,\eta\in P^+,\eta<\lambda}{\rm dim}V_{\eta}V_{w\eta}\right).
\end{align*}
Note that ${\rm Ch}(V(\varpi_i))={\rm av}(\varpi_{i})$.
Applying the ring homomorphism ${\rm Ch}$ to both sides, we obtain
 \[ \prod_{i=1}^n{\rm av}(\varpi_i)^{k_i}={\rm av}(\lambda)+\sum_{\eta\in P^+,\eta<\lambda}{\rm dim}V_{\eta}{\rm av}(\eta).\]
By induction, there exist inverse images for all ${\rm av}(\eta)$ with $\eta<\lambda$, then so is ${\rm av}(\lambda)$.
\end{proof}

Combining the \propref{prop: gpiC}, we can get the following corollary.
\begin{corollary}\label{cor:poly}
Let $\widetilde{C}_{\lambda}=\gamma_{-\rho}\circ\pi(C_{\lambda})$, then 
$(\U_{ev}^0)^W\cong \F[\widetilde{C}_{\varpi_1}, \widetilde{C}_{\varpi_2}, \dots, \widetilde{C}_{\varpi_n}]$, the polynomial algebra in the $n$ variables $\widetilde{C}_{\varpi_1},\widetilde{C}_{\varpi_2},,\cdots,\widetilde{C}_{\varpi_n}$. 
\end{corollary}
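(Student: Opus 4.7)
The plan is to recognise that the formula for $\widetilde{C}_\lambda$ provided by \propref{prop: gpiC} is literally the image of $[V(\lambda)]$ under the character map $\mathrm{Ch}$ of \lemref{lem: alge structure}, and then transport the polynomial structure of $K(\U)$ across that isomorphism.

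First I would compare the two expressions side by side. \propref{prop: gpiC} yields
\[
\widetilde{C}_\lambda \;=\; \gamma_{-\rho}\circ\pi(C_\lambda) \;=\; \sum_{\mu\in\Pi(\lambda)} m_\lambda(\mu)\,K_{2\mu},
\]
while the definition \eqref{eq:Ch} of $\mathrm{Ch}$ gives
\[
\mathrm{Ch}\bigl([V(\lambda)]\bigr) \;=\; \sum_{\mu\in\Pi(\lambda)} \dim V(\lambda)_\mu\,K_{2\mu}.
\]
Since $m_\lambda(\mu)=\dim V(\lambda)_\mu$ by definition, these two expressions coincide, and hence $\widetilde{C}_\lambda = \mathrm{Ch}([V(\lambda)])$ for every $\lambda\in P^+$. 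In particular $\widetilde{C}_{\varpi_i} = \mathrm{Ch}([V(\varpi_i)])$ for $1\leq i\leq n$.

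Next I would invoke the structural results already proved. By \lemref{lem: alge structure}, $\mathrm{Ch}\colon K(\U)\to (\U_{ev}^0)^W$ is an algebra isomorphism, and by \thmref{thm:generators of KU}, $K(\U)$ is a polynomial algebra over $\F$ in the classes $[V(\varpi_1)],\dots,[V(\varpi_n)]$. Applying the isomorphism $\mathrm{Ch}$ and using the identification of the previous paragraph, we conclude that $(\U_{ev}^0)^W$ is a polynomial algebra over $\F$ in the $n$ variables $\widetilde{C}_{\varpi_1},\dots,\widetilde{C}_{\varpi_n}$, which is exactly the content of the corollary.

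There is essentially no obstacle here; the entire argument is a two-line identification together with a transport-of-structure. The real work was done earlier: producing the explicit formula for $\widetilde{C}_\lambda$ in \propref{prop: gpiC}, constructing the character isomorphism $\mathrm{Ch}$ in \lemref{lem: alge structure}, and proving the polynomial structure of $K(\U)$ in \thmref{thm:generators of KU}. If anything were to require care it would be double-checking that the multiplicity conventions in \propref{prop: gpiC} and the dimension conventions in \eqref{eq:Ch} agree, but both refer to the same weight space $V(\lambda)_\mu$, so no subtlety arises.
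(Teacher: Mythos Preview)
Your proposal is correct and follows exactly the same route as the paper: identify $\widetilde{C}_\lambda$ with $\mathrm{Ch}([V(\lambda)])$ via \propref{prop: gpiC} and the definition of $\mathrm{Ch}$, then transport the polynomial structure from \thmref{thm:generators of KU} through the isomorphism of \lemref{lem: alge structure}. The paper's proof is simply a terser version of what you wrote.
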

\begin{proof}
By \propref{prop: gpiC} and the definition of ${\rm Ch}$, we have ${\rm Ch}([V(\lambda)])=\widetilde{C}_{\lambda}.$ In pariticular, ${\rm Ch}([V(\varpi_i)])=\widetilde{C}_{\varpi_i}$. By \thmref{thm:generators of KU} and \lemref{lem: alge structure}, we can immediately complete the proof.
\end{proof}

\subsection{Some remarks}\label{sect:rmk}
We have shown that the subset of central elements $\mathcal{C}^{(m)}_{\lambda}$ defined by \eqref{eq: Celmt}, with $m=1$ and $\lambda$ being the fundamental weights,   generates the centre of $\U_q(\fg)$.
We also expect the following to be true.
\begin{conjecture}\label{conj}
If tensor powers of  a finite dimensional simple $\U_q(\fg)$-module $V(\lambda)$ separate points of $\U_q(\fg)$ (see Proposition \ref{sec1.injection}), then there is a finite subset $\mathfrak{M}_{\fg, \lambda}$ of $\N^+$ such that $\{\CC^{(m)}_\lambda\mid m\in\mathfrak{M}_{\fg, \lambda}\}$ generates the centre of $\U_q(\fg)$.
\end{conjecture}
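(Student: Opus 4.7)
The plan is to transport the conjecture through the quantum Harish-Chandra isomorphism of \thmref{thm:HCiso}, express $\gamma_{-\rho}\circ\pi(\CC^{(m)}_\lambda)$ explicitly, and then invert a Vandermonde-type system in $m$. Fix $\lambda\in P^+$ such that tensor powers of $V(\lambda)$ separate points. The first step extends \propref{prop: gpiC} to general $m$. The operator $\Gamma_{V(\lambda)}$ is a $\U$-module endomorphism of $V(\lambda)\otimes M(\nu)$ for each Verma module $M(\nu)$, and $V(\lambda)\otimes M(\nu)$ carries a Verma filtration whose subquotients are $M(\nu+\mu)$ with multiplicity $m_\lambda(\mu)$ for $\mu\in\Pi(\lambda)$. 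Hence $\Gamma_{V(\lambda)}$ is block-triangular with scalar eigenvalue $\alpha_\mu(\nu)$ on the $\mu$-block, and the quantum analogue of the characteristic-identity computation \cite{BG} produces an explicit $\alpha_\mu(\nu)$ of the form $t_\mu\,q^{2(\mu,\nu)}$ with $t_\mu\in\F$ independent of $\nu$. Taking $q$-traces yields
$$\chi_\nu(\CC^{(m)}_\lambda)=\sum_{\mu\in\Pi(\lambda)}m_\lambda(\mu)\,q^{2(\rho,\mu)}\,\alpha_\mu(\nu)^m,$$
so its Harish-Chandra image has the form
$$\widetilde{\CC}^{(m)}_\lambda=\sum_{\mu\in\Pi(\lambda)}m_\lambda(\mu)\,t_\mu^{\,m}\,K_{2m\mu}\in(\U_{ev}^0)^W,$$
a power-sum expression in the ``eigen-variables'' $t_\mu K_{2\mu}$ with multiplicities $m_\lambda(\mu)$.

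Next I would translate the separation hypothesis through the algebra isomorphism $\text{Ch}:K(\U)\to(\U_{ev}^0)^W$ of \lemref{lem: alge structure}. By semisimplicity of finite-dimensional $\U$-modules and \propref{sec1.injection}, the hypothesis is equivalent to every simple $V(\mu)$ appearing as a summand of some $V(\lambda)^{\otimes k}$. Hence the collection $\{\text{Ch}([V(\lambda)])^k\mid k\geq 0\}$ expresses, via a triangular change of basis on $\{{\rm av}(\eta)\mid\eta\in P^+\}$ (cf.\ \lemref{lem:basis-1}), each $\text{Ch}([V(\mu)])$. I would then apply Newton's identities to the power sums $\widetilde{\CC}^{(m)}_\lambda$ for $m=1,\dots,|\Pi(\lambda)|$, recovering the elementary symmetric functions of the $t_\mu K_{2\mu}$ (weighted by $m_\lambda(\mu)$), and in particular $\widetilde{C}_\lambda=\sum_\mu m_\lambda(\mu)K_{2\mu}$. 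Combined with the first step, \corref{cor:poly} then realises each $\widetilde{C}_{\varpi_i}$ as a polynomial in finitely many $\widetilde{\CC}^{(m)}_\lambda$, producing the desired finite $\mathfrak{M}_{\fg,\lambda}\subset\N^+$.

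The main obstacle is the Vandermonde-to-character step. The separation hypothesis is strictly weaker than ``$[V(\lambda)]$ polynomially generates $K(\U)$'': for $\fg=\fsl_n$ and $V(\lambda)$ the standard module, powers of $[V(\varpi_1)]$ do not polynomially express $[V(\varpi_i)]$ for $i>1$, yet tensor powers separate points. A naive Vandermonde inversion returns the individual $t_\mu K_{2\mu}$, which generically are not $W$-invariant. The delicate point is therefore to show that the specific multiplicity-weighted symmetric combinations realised by the $\widetilde{\CC}^{(m)}_\lambda$, together with the multiplicative structure of $(\U_{ev}^0)^W$, suffice to reach each $\widetilde{C}_{\varpi_i}$. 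One expects $|\mathfrak{M}_{\fg,\lambda}|$ to be controlled by $\dim V(\lambda)$ minus the degeneracies among $\{\alpha_\mu(\nu)\}$; the case $\fg=\gl_\ell$ with $V(\lambda)$ the natural module, where the eigenvalues are distinct and a clean answer exists, is the model worked out in \cite{Junbo}.
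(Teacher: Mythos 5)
This statement is a \emph{conjecture} in the paper: the author explicitly leaves it open, offering only the remark that the case $\fg=\gl_n$ with $V(\lambda)$ the natural module follows from \cite{Junbo}. So there is no proof in the paper to compare against, and the real question is whether your argument closes the gap. It does not, as you essentially concede in your final paragraph. Your first step is sound and standard: the eigenvalue of $\Gamma_{V(\lambda)}$ on the Verma factor $M(\nu+\mu)$ of $V(\lambda)\otimes M(\nu)$ has the form $t_\mu q^{2(\mu,\nu)}$, so $\gamma_{-\rho}\circ\pi(\CC^{(m)}_\lambda)$ is a linear combination of the $K_{2m\mu}$, $\mu\in\Pi(\lambda)$, with $m$-independent weights --- a weighted power sum in the elements $t_\mu K_{2\mu}$, consistent with \propref{prop: gpiC} at $m=1$. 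But the pivotal claim of your second paragraph --- that the separation hypothesis lets the powers ${\rm Ch}([V(\lambda)])^k$ express each ${\rm Ch}([V(\mu)])$ via a triangular change of basis --- is false, and your own third paragraph explains why: occurrence of $V(\mu)$ as a direct summand of $V(\lambda)^{\otimes k}$ is strictly weaker than membership of $[V(\mu)]$ in the subring $\F\bigl[[V(\lambda)]\bigr]\subseteq K(\U)$. For $\fg=\fsl_3$ and $\lambda=\varpi_1$ that subring is a polynomial ring in one variable and does not contain $[V(\varpi_2)]$, even though tensor powers of $V(\varpi_1)$ separate points.

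What remains after your reduction is precisely the content of the conjecture. Two further technical points: Newton's identities do not apply verbatim, since the power sums carry the fixed coefficients $m_\lambda(\mu)q^{(2\rho,\mu)}$ rather than unit weights; and even granting that the subalgebra generated by all $\widetilde{\CC}^{(m)}_\lambda$ coincides with a suitable algebra of symmetric functions of the multiset $\{t_\mu K_{2\mu}\}$, you have not shown that this subalgebra exhausts $(\U_{ev}^0)^W$, i.e.\ contains every $\widetilde{C}_{\varpi_i}$ of \corref{cor:poly}; generically it is a proper subalgebra, and the whole difficulty is to show that the separation hypothesis forces equality. What your observation genuinely buys --- and would be worth isolating --- is a conditional finiteness statement: since all $\widetilde{\CC}^{(m)}_\lambda$ are power-sum-type expressions in at most $|\Pi(\lambda)|$ quantities, the subalgebra they generate is already generated by those with $m$ bounded by $\dim V(\lambda)$. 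This reduces the conjecture to showing that the \emph{infinite} family $\{\CC^{(m)}_\lambda\}_{m\ge 1}$ generates the centre, which is progress, but that generation statement is exactly what is open.
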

This is the case  \cite{Junbo} for $\fg=\mathfrak{gl}_n$ and $V(\lambda)=\F^n$ being the natural module; and one can also easily extract from op. cit.  such a set of generators for the center of $\U_q(\fsl_n)$.
It will be very interesting to prove this for $\U_q(\fg)$ for the other simple Lie algebras $\fg$ by identifying such $\lambda$ and the corresponding minimal sets $\mathfrak{M}_{\fg, \lambda}$.

\begin{appendix}
\section{Proof of the Harish-Chandra isomorphism}\label{app: proof}
This part is about the algebraic proof of \thmref{thm:HCiso}, i.e., the quantised Harish-Chandra isomophism of $\U_q(\fg)$.
Note that it can be proven in much the same way as the proof in \cite[Chapter 6]{Jantzen}. However, we can hardly find a proof in detail with the method developed in \cite{Jantzen}.
Hence, we give some pertinent steps in the following.

Write $\U=\U_q(\fg)$. We first show that $\gamma_{-\rho}\circ\pi$ indeed maps  $Z({\rm U})$ into the invariant subalgebra $({\rm U}_{ev}^0)^W$.
  
Observe the following elementary result.
\begin{lemma}\label{sec2.inj}
Let $\lambda\in P$. Any $u\in Z({\rm U})$ acts on the Verma module $M(\lambda)$ as a scalar multiplication by $\chi_{\lambda}(\pi(u))$.
\end{lemma}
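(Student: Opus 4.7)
The plan is to compute the action of $u$ on the highest weight vector $v_\lambda\in M(\lambda)$ directly, then use centrality to propagate to all of $M(\lambda)$. Since $Z({\rm U})\subseteq {\rm U}_0$, and the decomposition ${\rm U}_0={\rm U}^0\oplus\bigoplus_{\nu>0}{\rm U}^-_{-\nu}{\rm U}^0{\rm U}^+_\nu$ holds with $\pi$ the projection onto the first summand, we can write
\[
u=\pi(u)+u', \qquad u'\in\bigoplus_{\nu>0}{\rm U}^-_{-\nu}{\rm U}^0{\rm U}^+_\nu.
\]
The key observation is that any monomial appearing in $u'$ has, as its rightmost factor, a product of $E_i$'s of total weight $\nu>0$. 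Such a product annihilates $v_\lambda$ because $E_i v_\lambda=0$ for all $i$, so $u'\cdot v_\lambda=0$.

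It remains to evaluate $\pi(u)\cdot v_\lambda$. Since $\pi(u)\in{\rm U}^0$ is a linear combination of $K_\mu$'s, and $K_\mu v_\lambda=q^{(\lambda,\mu)}v_\lambda=\chi_\lambda(K_\mu)v_\lambda$ by the defining property of $v_\lambda$ together with the definition of $\chi_\lambda$ in Section \ref{sec: QuanGrp}, we get
\[
\pi(u)\cdot v_\lambda=\chi_\lambda(\pi(u))\,v_\lambda.
\]
Combining the two computations yields $u\cdot v_\lambda=\chi_\lambda(\pi(u))\,v_\lambda$.

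Finally, to extend this from $v_\lambda$ to all of $M(\lambda)$, I would use centrality: every element of $M(\lambda)$ has the form $x\cdot v_\lambda$ for some $x\in{\rm U}$ (in fact for $x\in{\rm U}^-$), so
\[
u\cdot(x\cdot v_\lambda)=x\cdot(u\cdot v_\lambda)=\chi_\lambda(\pi(u))\,(x\cdot v_\lambda).
\]
Thus $u$ acts as the scalar $\chi_\lambda(\pi(u))$ on all of $M(\lambda)$. There is no substantial obstacle here; the argument is essentially the quantum analogue of the classical fact that a central element acts on a Verma module by its Harish-Chandra image evaluated at the highest weight. The only point requiring care is the direct sum decomposition of ${\rm U}_0$, which is a consequence of the triangular decomposition ${\rm U}^-\otimes{\rm U}^0\otimes{\rm U}^+\cong{\rm U}$ noted in Section \ref{sec: QuanGrp}.
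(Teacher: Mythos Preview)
Your argument is correct and is exactly the standard elementary argument the paper has in mind; indeed, the paper omits a proof entirely, introducing the lemma only with the phrase ``Observe the following elementary result.'' Your decomposition of $u$ via ${\rm U}_0={\rm U}^0\oplus\bigoplus_{\nu>0}{\rm U}^-_{-\nu}{\rm U}^0{\rm U}^+_\nu$, the annihilation of $v_\lambda$ by the $\nu>0$ part, and the propagation by centrality together constitute the expected proof.
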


As an immediate consequence, we have
  
\begin{lemma}\label{lem: HCinj}
The restriction of $\pi$ to $Z({\rm U})$ is injective, and hence so
is $\gamma_{-\rho}\circ\pi$.
\end{lemma}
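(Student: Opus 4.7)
The plan is to chain Lemma~\ref{sec2.inj} together with Proposition~\ref{sec1.injection}, using the finite-dimensional simple modules $V(\lambda)$ as a bridge between Verma modules and arbitrary finite-dimensional representations.

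Suppose $u\in Z(\U)$ satisfies $\pi(u)=0$. The first step is to invoke Lemma~\ref{sec2.inj}, which says that $u$ acts on each Verma module $M(\lambda)$ as the scalar $\chi_\lambda(\pi(u))$; since $\pi(u)=0$ this scalar is zero for every $\lambda\in\mathfrak{h}^*$, so $u$ annihilates every Verma module. In particular, for $\lambda\in P^+$ the element $u$ annihilates the simple quotient $V(\lambda)=M(\lambda)/N(\lambda)$.

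The second step is to upgrade from simples to arbitrary finite-dimensional modules. Every finite-dimensional $\U$-module of type $\mathbf{1}$ is a direct sum of the simple modules $V(\lambda)$ with $\lambda\in P^+$ (complete reducibility of finite-dimensional type-$\mathbf{1}$ modules of the Jimbo quantum group at generic $q$, recalled in Section~\ref{sec2.representations}). Hence $u$ annihilates every finite-dimensional $\U$-module. Proposition~\ref{sec1.injection} then gives $u=0$, proving that $\pi|_{Z(\U)}$ is injective.

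For the second assertion, observe that $\gamma_{-\rho}\colon \U^0\to\U^0$ is an algebra automorphism by definition~\eqref{eq: gamma}, hence in particular injective. Consequently, $\gamma_{-\rho}\circ\pi|_{Z(\U)}$ is a composition of two injective maps and is itself injective.

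There is no serious obstacle here: the argument is essentially formal once Lemma~\ref{sec2.inj} is in hand. The one non-trivial input that is being used as a black box is the complete reducibility of finite-dimensional type-$\mathbf{1}$ modules, needed to pass from ``$u$ annihilates every $V(\lambda)$'' to ``$u$ annihilates every finite-dimensional $\U$-module'' so that Proposition~\ref{sec1.injection} applies; without it one would only obtain $u^n=0$ on a module of composition length $n$, which is weaker than required.
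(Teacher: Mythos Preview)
Your argument is correct and follows essentially the same route as the paper's proof: assume $\pi(u)=0$, use Lemma~\ref{sec2.inj} to conclude $u$ kills every $M(\lambda)$ and hence every $V(\lambda)$, then invoke Proposition~\ref{sec1.injection}. The only difference is that you explicitly insert the complete-reducibility step to pass from the $V(\lambda)$ to all finite-dimensional modules before applying Proposition~\ref{sec1.injection}, whereas the paper simply cites the proposition directly after obtaining $u.V(\lambda)=0$; your version is slightly more careful on this point, but the approach is the same.
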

\begin{proof}
If $\pi(u)=0$, then by Lemma \ref{sec2.inj}, we have $u.M(\lambda)=0$ and  hence $u.V(\lambda)=0$ for all $\lambda\in P^+$. By Proposition \ref{sec1.injection}, $u=0$.
\end{proof}
  
We now show that the image $\gamma_{-\rho}\circ\pi(Z({\rm U}))$ of the centre is invariant under the Weyl group action.
  
\begin{lemma}
\label{sec2.inva}
The images of $Z({\rm U})$ under the Harish-Chandra isomophism are all in $({\rm U}^0)^W$, i.e.,  $\gamma_{-\rho}\circ\pi(Z({\rm U}))\subseteq ({\rm U}^0)^W$.
\end{lemma}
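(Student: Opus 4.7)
The plan is to exploit the fact that central elements act by scalars on Verma modules, and to use embeddings between Verma modules to identify these scalars for weights linked by the dot action of simple reflections. Fix $u\in Z(\U)$. By \lemref{sec2.inj}, $u$ acts on $M(\lambda)$ as multiplication by the scalar $\chi_\lambda(\pi(u))$, so the information in $\gamma_{-\rho}\circ\pi(u)\in\U^0$ is completely captured by the family of scalars $\{\chi_\mu(\gamma_{-\rho}\pi(u))\}_{\mu}$. A direct computation from the definitions gives $\chi_\mu\circ\gamma_{-\rho}=\chi_{\mu-\rho}$ and $\chi_{s_i\mu}(h)=\chi_\mu(s_ih)$ for $h\in\U^0$, so $W$-invariance of $\gamma_{-\rho}\pi(u)$ will follow once I establish the ``dot-action linkage''
\[
\chi_\lambda(\pi(u))=\chi_{s_i\cdot\lambda}(\pi(u)),\qquad s_i\cdot\lambda:=s_i(\lambda+\rho)-\rho,
\]
for $\lambda$ ranging over a Zariski-dense subset of $P$ (e.g. $\lambda\in P^+$).

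The main obstacle, and essentially the only nontrivial step, is producing an embedding $M(s_i\cdot\lambda)\hookrightarrow M(\lambda)$ whenever $m:=(\lambda+\rho,\alpha_i^\vee)\in\N^+$. Following the classical recipe (adapted to the quantum setting as in \cite[Chapter 5]{Jantzen}), I would show that $F_i^m v_\lambda\in M(\lambda)$ is a singular vector of weight $\lambda-m\alpha_i=s_i\cdot\lambda$. The weight condition is clear from the commutation relations \eqnref{eq: Qrel3}, and $E_j(F_i^m v_\lambda)=0$ for $j\neq i$ is immediate from $[E_j,F_i]=0$ and $E_j v_\lambda=0$. For $j=i$ one uses the standard $\U_q(\fsl_2)$ identity
\[
[E_i,F_i^m]=[m]_{q_i}F_i^{m-1}\,\frac{q_i^{-(m-1)}K_i-q_i^{m-1}K_i^{-1}}{q_i-q_i^{-1}},
\]
which upon evaluation on $v_\lambda$ (using $K_iv_\lambda=q_i^{(\lambda,\alpha_i^\vee)}v_\lambda$) produces the factor $[m]_{q_i}[(\lambda+\rho,\alpha_i^\vee)-m]_{q_i}$, vanishing precisely for $m=(\lambda+\rho,\alpha_i^\vee)$. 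Universality of $M(s_i\cdot\lambda)$ then yields the desired embedding, and comparing the central scalars on $M(\lambda)$ and its submodule $M(s_i\cdot\lambda)$ gives the linkage equality.

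With the linkage in hand, the rest is bookkeeping. For $\mu\in P^++\rho$ I compute
\[
\chi_\mu\bigl(\gamma_{-\rho}\pi(u)\bigr)=\chi_{\mu-\rho}(\pi(u))=\chi_{s_i\cdot(\mu-\rho)}(\pi(u))=\chi_{s_i\mu-\rho}(\pi(u))=\chi_{s_i\mu}\bigl(\gamma_{-\rho}\pi(u)\bigr)=\chi_\mu\bigl(s_i\cdot\gamma_{-\rho}\pi(u)\bigr).
\]
Writing $h:=\gamma_{-\rho}\pi(u)-s_i\cdot\gamma_{-\rho}\pi(u)=\sum_\tau c_\tau K_\tau$ (a finite sum), the vanishing of $\chi_\mu(h)=\sum_\tau c_\tau q^{(\mu,\tau)}$ for all $\mu$ in the infinite set $P^++\rho$ forces $c_\tau=0$ by linear independence of distinct characters. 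Hence $\gamma_{-\rho}\pi(u)$ is fixed by each simple reflection $s_i$, and therefore by all of $W$, completing the proof.
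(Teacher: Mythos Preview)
Your approach is essentially the same as the paper's: use Verma module embeddings to establish the dot-action linkage $\chi_\lambda(\pi(u))=\chi_{s_i\cdot\lambda}(\pi(u))$, translate this into $\chi_\mu(h-s_ih)=0$ for $h=\gamma_{-\rho}\pi(u)$, and conclude by independence of characters. The one technical difference is the range of $\lambda$. You establish linkage only for $\lambda\in P^+$ and then appeal to density; the paper instead obtains it for \emph{all} $\lambda\in P$ by a short case split (if $(\lambda,\alpha_i^\vee)<-1$ then $(s_i\cdot\lambda,\alpha_i^\vee)\ge 0$, so swap the roles of $\lambda$ and $s_i\cdot\lambda$; the case $(\lambda,\alpha_i^\vee)=-1$ is trivial since $s_i\cdot\lambda=\lambda$). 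This lets the paper invoke Artin's lemma on linear independence of characters of the full group $P$ directly.

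Your route is correct, but be aware that ``linear independence of distinct characters'' does not literally apply when you only know vanishing on the subset $P^++\rho$; you need a one-line density/Vandermonde argument (pick a dominant $\nu$ separating the finitely many $\tau$'s and evaluate at $\mu=m\nu+\rho$ for $m\in\N$). Since you already flagged Zariski density, this is presumably what you intend---just make it explicit. Alternatively, adopting the paper's case split costs nothing and sidesteps the issue entirely.
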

\begin{proof}
Fix any central element $u\in Z(\rm U)$, we write  $h=\gamma_{-\rho}\circ\pi(u)$.
  
Given any $\lambda\in P$ and $i\in \{1,2,\cdots,n\}$, we let $\mu=s_{\alpha_i}(\lambda+\rho)-\rho$.
  
If $(\lambda,\alpha_i^{\vee})\geq 0$, there is a nontrivial homomorphism $M(\mu)\rightarrow M(\lambda)$ \cite[Chapter 5.9]{Jantzen}.
By Lemma \ref{sec2.inj},
\begin{align}
\label{eq: wh=h}
\chi_{\lambda+\rho}(h)=\chi_{\mu+\rho}(h)=\chi_{\lambda+\rho}(s_{\alpha_i}h).
\end{align}
  
If $(\lambda,\alpha_i^{\vee})<-1$, then $(\mu,\alpha_i^{\vee})$ is non-negative, thus we may apply the above arguments to $\mu$ to show that \eqref{eq: wh=h} still holds.
  
Then the only other possibility is that  $(\lambda,\alpha_i^{\vee})=-1$. In this case $\mu=\lambda$, and \eqref{eq: wh=h} holds trivially.
  
Since \eqref{eq: wh=h} holds for all $\lambda$ and $i$, and $s_{\alpha_i}$ generate $W$, we have
\begin{align}
\label{eq: wh-h=0}
\chi_{\lambda}(wh-h)=0,\quad \forall w\in W, \ \lambda\in P.
\end{align}
We can always write $wh-h=\sum_{\eta}a_{\eta}K_{\eta}$. Then \eqref{eq: wh-h=0} leads to
\[
\sum\limits_{\eta}a_{\eta}\chi_{\lambda}(K_\eta)=\sum\limits_{\eta}a_{\eta}q^{(\lambda, \eta)}= \sum\limits_{\eta}a_{\eta}\chi_{\eta}(K_\lambda)=0, \quad \forall \lambda\in P.
\]
Thus $\sum\limits_{\eta}a_{\eta}\chi_{\eta}=0$.
The linear independence of characters then implies $a_{\eta}=0$ for all $\eta$. Hence $wh-h=0$ for all $w\in W$, i.e.,  $h\in ({\rm U}^0)^W$ as claimed.
\end{proof}
  
Now the following lemma justifies the range of $\gamma_{-\rho}\circ\pi$ as  defined in \eqref{eq: HCmap}.
  
\begin{lemma}
\label{sec2.invainva}
The Harish-Chandra homomorphism $\gamma_{-\rho}\circ\pi$ maps $Z({\rm U})$ to $({\rm U}_{ev}^0)^W$.
\end{lemma}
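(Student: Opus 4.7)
Let $u \in Z(\U)$ and write $h := \gamma_{-\rho}\circ\pi(u) = \sum_{\eta\in P} a_\eta K_\eta$. By \lemref{sec2.inva} we already have $h \in (\U^0)^W$, so the coefficients satisfy $a_{w\eta} = a_\eta$ for every $w\in W$. The remaining content of \lemref{sec2.invainva} is the \emph{evenness}, namely $a_\eta = 0$ whenever $\eta \notin 2P$. My approach is to follow the algebraic strategy of Jantzen [Ch.~6] adapted to the generalised setting: use the centrality relations $[u, E_i] = [u, F_i] = 0$ together with the triangular decomposition to derive a functional equation on $\pi(u)$ that forces its support, after applying $\gamma_{-\rho}$, into $2P$.

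Concretely, I would expand $u = \pi(u) + \sum_{\nu > 0} u^{(\nu)}$ with $u^{(\nu)} \in \U^-_{-\nu}\U^0\U^+_\nu$, and for each simple root $\alpha_i$ read off the leading component (in a suitable PBW filtration) of $[u,F_i]=0$. Using $[K_\eta, F_i] = (q^{-(\eta,\alpha_i)}-1)F_i K_\eta$ together with \eqref{eq: Qrel4}, this identifies the $F_i\,\U^0\,E_i$-part of $u^{(\alpha_i)}$ as a specific expression involving $(\gamma_{-\alpha_i}-\mathrm{id})(\pi(u))$. The mirror computation from $[u,E_i]=0$ gives the same component expressed through $(\gamma_{\alpha_i}-\mathrm{id})(\pi(u))$. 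Equating the two expressions yields an equation of the form
\[
(\gamma_{\alpha_i}-\mathrm{id})(\pi(u)) \;=\; c_i\,(\gamma_{-\alpha_i}-\mathrm{id})(\pi(u))
\]
for an explicit $q$-scalar $c_i$, which on the level of coefficients couples $b_\eta$ and $b_{\eta+2\alpha_i}$ (where $\pi(u)=\sum b_\eta K_\eta$), forcing $b_\eta=0$ unless $\eta$ is compatible with an even shift in the $\alpha_i$-direction. Iterating across all $i$ and combining with the Weyl-invariance of $h$, the support of $\pi(u)$ is pinned down; after applying the shift $\gamma_{-\rho}$ one finds that the support of $h$ lies in $2P$, as required.

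\textbf{Main obstacle.} The hard part will be carrying out this combinatorial argument rigorously through every level of the triangular filtration, not just the leading one. One has to induct on the $Q^+$-ordering of $\nu$ and show that contributions from the higher $u^{(\nu)}$ (with $\nu > \alpha_i$) do not spoil the recursion; this is where careful bookkeeping of $q$-powers and iterated Serre-type reductions is needed. The structural reason behind the ``doubling'' that produces \emph{evenness} rather than merely $Q$-support is the symmetric occurrence of $K_i$ and $K_i^{-1}$ in \eqref{eq: Qrel4}: combining the two consistency relations from $[u,F_i]=0$ and $[u,E_i]=0$ pairs up $K_i$ with $K_i^{-1}$ so that the net shift operator is $K_i^2 = K_{2\alpha_i}$, which after spreading across all simple roots and applying the $\gamma_{-\rho}$-twist delivers support in $2P$. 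Once this is in hand, combined with the injectivity \lemref{lem: HCinj} and the explicit central elements $C_\lambda$ of \propref{prop: gpiC} (whose images already live in $(\U^0_{ev})^W$), the stage is set for the full isomorphism in \thmref{thm:HCiso}.
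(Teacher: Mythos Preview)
Your proposal has a genuine gap. The core of your plan is to extract \emph{two} independent relations on $\pi(u)$ --- one from $[u,F_i]=0$ and one from $[u,E_i]=0$ --- and then eliminate the unknown $u^{(\alpha_i)}$ to obtain a functional equation of the form $(\gamma_{\alpha_i}-{\rm id})(\pi(u)) = c_i(\gamma_{-\alpha_i}-{\rm id})(\pi(u))$. But in fact these two relations coincide. Writing $u^{(\alpha_i)}=F_i h_i E_i$ with $h_i\in\U^0$, the $F_i\U^0$-component of $[u,F_i]=0$ reads
\[
h_i\,\frac{K_i-K_i^{-1}}{q_i-q_i^{-1}} \;=\; -\sum_\eta b_\eta\bigl(q^{-(\eta,\alpha_i)}-1\bigr)K_\eta \;=\; -(\gamma_{-\alpha_i}-{\rm id})(\pi(u)),
\]
while the $\U^0 E_i$-component of $[u,E_i]=0$ gives, after writing $E_iK_\eta=q^{-(\eta,\alpha_i)}K_\eta E_i$, exactly the same identity (since $(q^{(\eta,\alpha_i)}-1)q^{-(\eta,\alpha_i)}=1-q^{-(\eta,\alpha_i)}$ and $h_i$ commutes with $K_i$). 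So there is only one equation, and it merely determines $h_i$ in terms of $\pi(u)$; no constraint of the type you claim emerges, and in particular no coupling $b_\eta\leftrightarrow b_{\eta+2\alpha_i}$ is produced at this level. The ``doubling'' you describe simply does not occur from comparing the $E_i$- and $F_i$-commutators.

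The paper's argument is entirely different and avoids any filtration bookkeeping. It uses the sign automorphisms $\psi_\sigma$ of \eqref{sec2.widehatsigma}: since $\psi_\sigma$ commutes with both $\pi$ and $\gamma_{-\rho}$ and preserves $Z(\U)$, applying it to $u$ shows that $\sum_\mu a_\mu\sigma(\mu)K_\mu$ is again $W$-invariant. Comparing with $a_{w\mu}=a_\mu$ forces $\sigma(\mu)=\sigma(w\mu)$ whenever $a_\mu\neq 0$; choosing $\sigma$ with $\sigma(\alpha_i)=-1$ for all $i$ then gives $(-1)^{(\mu,\alpha_i^\vee)}=1$, i.e.\ $\mu\in 2P$. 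This is the standard device (as in \cite[Ch.~6]{Jantzen}), and it delivers the evenness in a few lines.
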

\begin{proof}
Take an arbitrary $u\in Z({\rm U})$, and write
\[
\gamma_{-\rho}\circ\pi(u)=\sum\limits_{\mu\in P}a_{\mu}K_{\mu}.
\]
By Lemma \ref{sec2.inva}, $\gamma_{-\rho}\circ \pi(u)\in ({\rm U}^0)^W$. Thus $a_{w\mu}=a_{\mu}$ for all $w\in W$ and $\mu\in P$. We have to show that $a_{\mu}\neq0$ only if $\mu\in2 P.$
  
Recall from (\ref{sec2.widehatsigma}) that there is an automorphism $\psi_{\sigma}$ of ${\rm U}$ associated to each group character $\sigma$ as defined in \eqref{eq: gpchar}. It can be easily verified that $\psi_{\sigma}$ commutes with both $\pi$ and $\gamma_{-\rho}$. Therefore, we have
$$\gamma_{-\rho}\circ\pi(\psi_{\sigma}(u))=\psi_{\sigma}(\sum\limits_{\mu}a_{\mu}K_{\mu})=\sum\limits_{\mu}a_{\mu}\sigma(\mu) K_{\mu},$$
which lands in $({\rm U}^0)^W$ since  $\psi_{\sigma}(u)$ is central. It follows that
$$a_{\mu}\sigma(\mu)=a_{w\mu}\sigma(w\mu)=a_{\mu}\sigma(w\mu)\quad  \forall w\in W, \mu\in P.$$
Since we have assumed that $a_{\mu}\neq 0$, this in particular implies $1=\sigma(\mu-s_{\alpha_i}\mu)$ for $1\leq i\leq n$. Fixing a group character $\sigma: P\to \mathbb{C}^{\times }$ such that $\sigma(\alpha_i)=-1$ for all $i$, we have
$$\sigma(\mu-s_{\alpha_i}\mu)=\sigma((\mu, \alpha_i^{\vee})\alpha_i)=(-1)^{(\mu, \alpha_i^{\vee})}=1.$$
This implies that $(\mu, \alpha_i^{\vee})$ is even for  $1\leq i\leq n$, i.e.,  $\mu\in 2P$.
\end{proof}

Now we prove the quantum Harish-Chandra isomorphism following \cite[Chapter 6]{Jantzen}.
  
\subsection{Proof of the isomorphism}
By \lemref{lem: HCinj},  the restriction of $\gamma_{-\rho}\circ\pi$ to $Z({\rm U})$ is injective. Therefore, it suffices to show surjectivity of the map \eqref{eq: HCmap} in order to prove \thmref{thm:HCiso}. We do this by showing that each basis element of the invariant subalgebra $({\rm U}_{ev}^0)^W$ has a pre-image in $Z({\rm U})$.

We will follow the strategy of \cite{Jantzen} to prove the surjectivity. This relies in an essential way on a non-degenerate bilinear form on ${\rm U}$,  which can be constructed in exactly the same way as in
\cite[Chapter 6]{Jantzen}.
However, the explicit construction is rather involved and technical. We will merely describe the main properties of the form here, and refer to op. cit. for details.
  
\begin{lemma}\cite[Chapter 6]{Jantzen}\label{lem: Upairing}
There exists a unique bilinear form
\[ (\ ,\ ): {\rm U}^{\leq 0} \times {\rm U}^{\geq 0} \rightarrow \F \]
with the following properties:
\[
\begin{aligned}
      &(K_{\lambda}, K_{\mu})=q^{-(\lambda, \mu)}, &\quad& (K_{\lambda}, E_i)=0,\\
    & (F_i, E_j)= \delta_{ij} (q_i-q_i^{-1})^{-1}, &\quad & (F_i, K_{\lambda})=0,\\
    &(x, y_1y_2)=(\Delta(x), y_1\otimes  y_2), &\quad& (x_1x_2,y)=(x_1\otimes x_2, \Delta(y)),
\end{aligned}
\]
for all $x, x_1, x_2\in {\rm U}^{\leq 0}$, $y,y_1,y_2\in  {\rm U}^{\geq 0}$, $\lambda, \mu \in P$ and $1\leq i,j\leq n$.
\end{lemma}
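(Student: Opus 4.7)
The plan is to establish this pairing following the Drinfeld-double blueprint of \cite[Chapter 6]{Jantzen}, adapted to the generalised quantum group $\U$.

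For \emph{uniqueness}, I would argue by induction on word length. Since ${\rm U}^{\leq 0}$ is generated as an algebra by the $F_i$ and $K_\lambda$, and ${\rm U}^{\geq 0}$ by the $E_i$ and $K_\mu$, the two coproduct-compatibility axioms
\[
(x,y_1y_2)=(\Delta(x),y_1\otimes y_2),\qquad (x_1x_2,y)=(x_1\otimes x_2,\Delta(y))
\]
reduce any pairing of the form $(F_{i_1}\cdots F_{i_r}K_\lambda,\, E_{j_1}\cdots E_{j_s}K_\mu)$ to a sum of products of pairings of single generators, all of which are prescribed. Hence at most one such form can exist.

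For \emph{existence}, I would first define the form on the \emph{free} versions of ${\rm U}^{\leq 0}$ and ${\rm U}^{\geq 0}$, i.e.\ before imposing the quantum Serre and commutation relations \eqref{eq: Qrel4}--\eqref{eq: Qrel6}, via the inductive recipe dictated by the coproduct axioms together with the explicit formulae $\Delta(F_i)=F_i\otimes K_i^{-1}+1\otimes F_i$ and $\Delta(E_i)=K_i\otimes E_i+E_i\otimes 1$. Coassociativity of $\Delta$ guarantees this recipe is internally consistent, so one obtains a well-defined bilinear form on the free algebras.

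The \emph{main obstacle} is showing that this candidate descends through the quotient by the defining relations of $\U$. Writing $r_{ij}^+$ for the quantum Serre element in \eqref{eq: Qrel5} and $r_{ij}^-$ for that in \eqref{eq: Qrel6}, one must verify $(x,r_{ij}^+)=0$ and $(r_{ij}^-,y)=0$ for all $x\in{\rm U}^{\leq 0}$ and $y\in{\rm U}^{\geq 0}$. By the coproduct axiom, the first test collapses to the single monomial $x=F_i^{1-a_{ij}}F_j$, where it becomes a Gauss-type $q$-binomial identity; the second is symmetric. Consistency with the cross relation \eqref{eq: Qrel4} reduces to a direct computation on quadratic monomials using $(F_i,E_j)=\delta_{ij}(q_i-q_i^{-1})^{-1}$ and $(K_\lambda,K_\mu)=q^{-(\lambda,\mu)}$. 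Compared to \cite{Jantzen}, the only adjustment needed is that $\lambda,\mu$ now range over the full weight lattice $P$ rather than the root lattice, which affects only the numerical value $(K_\lambda,K_\mu)=q^{-(\lambda,\mu)}$ and not the structural argument.
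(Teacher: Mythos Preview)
The paper does not give its own proof of this lemma: it is stated with the citation \cite[Chapter 6]{Jantzen} and the surrounding text explicitly says ``We will merely describe the main properties of the form here, and refer to op.\ cit.\ for details.'' Your outline is precisely the standard Jantzen argument the paper is pointing to, so in that sense your approach and the paper's (deferred) approach coincide.

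One small correction to your sketch: the cross relation \eqref{eq: Qrel4}, namely $E_iF_j-F_jE_i=\delta_{ij}(K_i-K_i^{-1})/(q_i-q_i^{-1})$, is \emph{not} a relation inside either ${\rm U}^{\leq 0}$ or ${\rm U}^{\geq 0}$, since neither algebra contains both $E_i$ and $F_j$. Hence there is nothing to check for \eqref{eq: Qrel4} in order for the pairing to descend. What you do need to check, in addition to the Serre relations \eqref{eq: Qrel5}--\eqref{eq: Qrel6}, are the commutation relations \eqref{eq: Qrel2} and \eqref{eq: Qrel3} (i.e.\ $K_\lambda E_j K_\lambda^{-1}=q^{(\lambda,\alpha_j)}E_j$ in ${\rm U}^{\geq 0}$ and its $F$-analogue in ${\rm U}^{\leq 0}$), together with the group-algebra relations \eqref{eq: Qrel1} on the $K_\lambda$'s. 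These reduce to short computations using $(K_\lambda,K_\mu)=q^{-(\lambda,\mu)}$ and the coproduct formulae, and are where the extension from the root lattice to the full weight lattice $P$ actually enters. With that adjustment, your plan is sound.
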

  
\begin{proposition}\cite[Chapter 6]{Jantzen}
\label{prop: Upairing}
Let $\lambda,\eta\in P$, $\mu,\nu\in Q^+.$
\begin{enumerate}
 \item  $(xK_{\lambda},yK_{\eta})=q^{-(\lambda,\eta)}(x,y)$ for any $x\in {\rm U}^-$ and $y\in {\rm U}^+$.
  \item $({\rm U}^{-}_{-\nu} ,{\rm U}_{\mu}^+)=0$  for any $\mu\neq\nu$.
  \item The restriction $(\ ,\ )|_{{\rm U}_{-\mu}^-\times{\rm U}_{\mu}^+}$ is non-degenerate.
\end{enumerate}
\end{proposition}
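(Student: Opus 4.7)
Parts (1) and (2) are formal consequences of the defining properties of the bilinear form listed in \lemref{lem: Upairing}, while part (3) is the substantive content. The plan is to establish (1) and (2) by induction using the Hopf-pairing axioms, and then prove (3) by constructing PBW-type essentially dual bases for ${\rm U}^-_{-\mu}$ and ${\rm U}^+_\mu$.

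For (1), I would first verify by induction on the height of $\nu$ that $(K_\lambda, {\rm U}^+_\nu) = 0$ and $({\rm U}^-_{-\nu}, K_\lambda) = 0$ whenever $\nu > 0$, reducing via the two multiplicativity axioms and $\Delta(K_\lambda) = K_\lambda \otimes K_\lambda$ to the base cases $(K_\lambda, E_i) = 0$ and $(F_i, K_\lambda) = 0$. Expanding $(xK_\lambda, yK_\eta)$ via the axiom $(x_1 x_2, z) = (x_1 \otimes x_2, \Delta(z))$, and doing a parallel expansion via $(x, y_1y_2) = (\Delta(x), y_1 \otimes y_2)$, decomposing $\Delta(y)$ and $\Delta(x)$ along the natural weight gradings inherited from the triangular decomposition ${\rm U}^\pm = \bigoplus_\nu {\rm U}^\pm_{\pm \nu}$, and invoking the vanishing statements above, collapses the sums to a surviving diagonal term and produces the scalar $q^{-(\lambda, \eta)}$ via the defining value $(K_\lambda, K_\eta) = q^{-(\lambda, \eta)}$.

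For (2), the argument proceeds by induction on $\text{ht}(\mu) + \text{ht}(\nu)$. The base cases are either vacuous or coincide with the two vanishing statements established in the course of proving (1). For the inductive step, given $x \in {\rm U}^-_{-\nu}$ and $y \in {\rm U}^+_\mu$ with $\nu \neq \mu$, I would write $y$ as a linear combination of products $E_i y'$ with $y' \in {\rm U}^+_{\mu - \alpha_i}$, apply $(x, E_i y') = (\Delta(x), E_i \otimes y')$, decompose $\Delta(x)$ into graded pieces, and use (1) together with the inductive hypothesis on weights of smaller height to conclude that every term vanishes.

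The main obstacle is part (3). My plan is to fix a reduced expression $w_0 = s_{i_1}\cdots s_{i_N}$ for the longest element of $W$ and use the associated Lusztig root vectors $E_{\beta_r}, F_{\beta_r}$ recalled in \secref{sec:D} to build PBW bases $\{F_{\mathbf{r}}\}_{\mathbf{r}}$ of ${\rm U}^-_{-\mu}$ and $\{E_{\mathbf{t}}\}_\mathbf{t}$ of ${\rm U}^+_\mu$, with multi-indices constrained by $\sum r_j \beta_j = \sum t_j \beta_j = \mu$. Iterated application of $(x, y_1 y_2) = (\Delta(x), y_1 \otimes y_2)$, together with the seed value $(F_i, E_i) = (q_i - q_i^{-1})^{-1}$ and the coproduct behaviour of root vectors under the Lusztig automorphisms, delivers explicit nonzero values for the diagonal entries $(F_{\mathbf{r}}, E_{\mathbf{r}})$ as products of $q$-factorials and factors $(q_i - q_i^{-1})^{-r_j}$. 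The off-diagonal entries $(F_{\mathbf{r}}, E_{\mathbf{t}})$ with $\mathbf{r} \neq \mathbf{t}$ vanish with respect to a suitable lexicographic order on multi-indices, so the Gram matrix is triangular with nonzero diagonal and hence invertible. The combinatorial accounting needed to verify this triangularity is exactly that carried out in \cite[Chapter 6]{Jantzen}, and it transfers directly to the present setting because the enlarged Cartan part ${\rm U}^0$ does not enter the computations on ${\rm U}^\pm$.
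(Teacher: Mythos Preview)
The paper does not supply its own proof of this proposition; it is stated with a citation to \cite[Chapter 6]{Jantzen} and then used as input for the bilinear form $\langle\,\cdot\,,\,\cdot\,\rangle$ on ${\rm U}$. Your outline is correct and is essentially the argument in the cited reference: parts (1) and (2) are exactly the inductive unwinding of the Hopf-pairing axioms that Jantzen carries out, and your plan for (3) via PBW bases and a triangular Gram matrix is the standard route (in Jantzen the PBW computation is deferred to Chapter~8, to which Chapter~6 appeals). One minor sharpening: with the normalisations in \eqref{eq: tilR} the PBW monomials are in fact \emph{orthogonal} for this pairing, not merely triangular, so the Gram matrix is diagonal with the nonzero $q$-factorial entries you describe; either way the conclusion of invertibility follows.
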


We now define a bilinear form on ${\rm U}$ by using \lemref{lem: Upairing}. Recall that ${\rm U}^+$ (resp. ${\rm U}^-$) is $Q^+$-graded (resp. $Q^-$-graded) vector space with respect to the ${\rm U}^0$-action given in \eqref{eq: grading}, and the multiplication induces an isomorphism ${\rm U}^-\otimes {\rm U}^0 \otimes {\rm U}^+\cong {\rm U}$.  Since $K_{\mu}$ is a unit in  ${\rm U}$, we can rearrange this isomorphism into
\[ \bigoplus_{\mu,\nu \in Q^+}  {\rm U}^-_{-\mu}K_{\mu} \otimes {\rm U}^0 \otimes {\rm U}^+_{\nu} \cong {\rm U}. \]
Now  the bilinear form
$\langle\ ,\ \rangle: {\rm U}\times {\rm U}\rightarrow \F$
is defined on the graded components  by
\begin{equation}\label{eq: BilinearForm}
\langle yK_{\nu}K_{\lambda}x,y'K_{\nu'}K_{\eta}x'\rangle:=(y',x)(y,x')q^{(2\rho,\nu)}(q^{1/2})^{-(\lambda,\eta)}
\end{equation}
for all $x\in {\rm U}_{\mu}^+, x'\in {\rm U}_{\mu'}^+$, $y\in {\rm U}_{-\nu}^-$, and $y'\in {\rm U}_{-\nu'}^-$, with $\lambda,\eta\in P,\mu,\mu',\nu,\nu'\in Q^+$. It follows immediately from part (2) of \propref{prop: Upairing} that
\[ \langle {\rm U}_{-\nu}^-{\rm U}^0{\rm U}_{-\mu}^+,{\rm U}_{-\nu'}^-{\rm U}^0{\rm U}_{\mu'}^+\rangle=0,\quad \text{unless}\ \mu=\nu', \nu=\mu'.\]
  
The following proposition gives two significant properties for the bilinear form \eqref{eq: BilinearForm}, which will be used in the proof of surjectivity of the Harish-Chandra homomorphism.
\begin{proposition}\cite[Chapter 6]{Jantzen}
\label{sec3.prop of second bilinear form}
\begin{enumerate}
\item If $\langle v,u\rangle=0$ for all $v\in{\rm U}$, then $u=0$;
\item $\langle\textnormal{ad}(x)u,v\rangle=\langle u,\textnormal{ad}(S(x))v\rangle$ for all $x,u,v\in{\rm U}$.
\end{enumerate}
\end{proposition}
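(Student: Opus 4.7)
The proposition has two independent claims---non-degeneracy and ad-invariance of $\langle\cdot,\cdot\rangle$---and my strategy is to follow the broad outline of Jantzen \cite[Chapter 6]{Jantzen}, with minor adjustments for the enlarged Cartan part generated by $\{K_\lambda\mid\lambda\in P\}$ instead of only $\{K_\mu\mid\mu\in Q\}$. Both parts rest on the Hopf-pairing properties of $(\cdot,\cdot)$ from \lemref{lem: Upairing} and the graded non-degeneracy in \propref{prop: Upairing}.

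For part (1), the key structural observation comes from \eqref{eq: BilinearForm}: on the refined triangular decomposition
\[
{\rm U}\;\cong\;\bigoplus_{\mu,\nu\in Q^+,\,\lambda\in P}{\rm U}^-_{-\mu}K_\mu K_\lambda\,{\rm U}^+_\nu,
\]
the form between $yK_\mu K_\lambda x$ and $y'K_{\mu'}K_\eta x'$ vanishes unless $\mu'=\nu$ and $\nu'=\mu$, by part (2) of \propref{prop: Upairing}. On a matched pair the form is, up to the non-zero scalar $q^{(2\rho,\mu)}(q^{1/2})^{-(\lambda,\eta)}$, the external product of the two restricted pairings $(\cdot,\cdot)|_{{\rm U}^-_{-\mu}\times{\rm U}^+_\mu}$ and $(\cdot,\cdot)|_{{\rm U}^-_{-\nu}\times{\rm U}^+_\nu}$, each of which is non-degenerate by part (3) of \propref{prop: Upairing}. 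So given $u\neq 0$, I would pick a triple $(\mu_0,\nu_0,\lambda_0)$ for which the corresponding graded component $u_{\mu_0,\nu_0,\lambda_0}$ of $u$ is non-zero, fix any $\eta_0\in P$, and use the tensor-product non-degeneracy to construct $v\in{\rm U}^-_{-\nu_0}K_{\nu_0}K_{\eta_0}{\rm U}^+_{\mu_0}$ that pairs non-trivially with $u_{\mu_0,\nu_0,\lambda_0}$; since $v$ kills every other graded summand of $u$, this yields $\langle v,u\rangle\neq 0$.

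For part (2), I would first reduce to checking ad-invariance on generators. Indeed, since ${\rm ad}$ is an algebra homomorphism and $S$ an antihomomorphism, if the identity holds for $x_1$ and for $x_2$ then
\[
\langle{\rm ad}(x_1x_2)u,v\rangle=\langle{\rm ad}(x_2)u,{\rm ad}(S(x_1))v\rangle=\langle u,{\rm ad}(S(x_2)){\rm ad}(S(x_1))v\rangle=\langle u,{\rm ad}(S(x_1x_2))v\rangle,
\]
so it propagates to arbitrary products. For $x=K_\lambda$ the adjoint action is conjugation $y\mapsto K_\lambda y K_\lambda^{-1}$, and the identity follows directly by tracking how the scalar $q^{(2\rho,\mu)}(q^{1/2})^{-(\lambda,\eta)}$ transforms under this conjugation on both sides. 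For $x=E_i$ and $x=F_i$ one expands ${\rm ad}(E_i)(y)=E_iy-K_iyK_i^{-1}E_i$ (and its $F$-analogue), substitutes into \eqref{eq: BilinearForm}, and repeatedly applies the Hopf-pairing axioms $(x,y_1y_2)=(\Delta(x),y_1\otimes y_2)$ and $(x_1x_2,y)=(x_1\otimes x_2,\Delta(y))$ to shuttle generators across the pairing until the two sides match.

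The main obstacle is precisely the $x=E_i,F_i$ case of part (2). Conceptually nothing mysterious is going on---one is essentially exploiting that $\langle\cdot,\cdot\rangle$ is the Drinfeld pairing coming from presenting ${\rm U}$ as a quotient of a quantum double---but the bookkeeping is delicate, because \eqref{eq: BilinearForm} crosses the ${\rm U}^-$- and ${\rm U}^+$-components and carries the bicharacter factor $q^{(2\rho,\mu)}(q^{1/2})^{-(\lambda,\eta)}$, so each manipulation of $\Delta$ spawns $K$-shifts and $q$-powers that must be tracked faithfully. Modulo this bookkeeping, the argument transposes to the present $P$-Cartan setting essentially verbatim from \cite[Chapter 6]{Jantzen}.
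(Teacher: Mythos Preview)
The paper itself gives no proof of this proposition; it simply records the citation to \cite[Chapter 6]{Jantzen} and uses the result. Your sketch is therefore not competing with anything in the paper beyond that citation, and it is indeed the standard Jantzen argument adapted to the $P$-graded Cartan part.

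There is, however, a genuine (though reparable) gap in your argument for part~(1). You treat the bicharacter factor $(q^{1/2})^{-(\lambda,\eta)}$ as ``a non-zero scalar'', fix $\eta_0$, and then assert that your chosen $v\in{\rm U}^-_{-\nu_0}K_{\nu_0}K_{\eta_0}{\rm U}^+_{\mu_0}$ ``kills every other graded summand of $u$''. That is false: such a $v$ annihilates the summands with $(\mu,\nu)\neq(\mu_0,\nu_0)$, but it does \emph{not} annihilate $u_{\mu_0,\nu_0,\lambda}$ for $\lambda\neq\lambda_0$. All of those components contribute to $\langle v,u\rangle$, each weighted by $(q^{1/2})^{-(\eta_0,\lambda)}$, and they could in principle cancel. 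The correct viewpoint is that $(q^{1/2})^{-(\lambda,\eta)}$ is not a scalar but a \emph{third} non-degenerate pairing, on ${\rm U}^0\times{\rm U}^0$: since the bilinear form $(\cdot,\cdot)$ is non-degenerate on $P$, the characters $\eta\mapsto(q^{1/2})^{-(\lambda,\eta)}$ for distinct $\lambda\in P$ are linearly independent over $\F$. Once you incorporate this, the form on a matched $(\mu_0,\nu_0)$-block is a tensor product of \emph{three} non-degenerate pairings rather than two, and non-degeneracy follows. Your outline for part~(2) is fine.
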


Let $M$ be a finite dimensional ${\rm U}$-module. For any $m\in M$ and $f\in M^*$, let $c_{f,m}\in {\rm U}^*$ be the linear form with $c_{f,m}(v)=f(vm)$ for any $v\in{\rm U}.$ The following lemma follows from the non-degeneracy of the form $\langle\ , \ \rangle$ \cite[Chapter 6.22]{Jantzen}.
\begin{lemma}\label{lem: cfm}
Retain notation above. There
exists a unique element $u\in {\rm U}$, depending on $f\in M^*$, $m\in M$ such that
\[ c_{f,m}(v)=\langle v,u\rangle, \quad \forall v\in {\rm U}.   \]
\end{lemma}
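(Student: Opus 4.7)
The plan is to handle uniqueness and existence separately. \emph{Uniqueness} is immediate from the non-degeneracy statement in part~(1) of \propref{sec3.prop of second bilinear form}: if $u$ and $u'$ both represent $c_{f,m}$ via $\langle\ ,\ \rangle$, then $\langle v,u-u'\rangle=0$ for every $v\in\U$, so $u=u'$. All the work lies in existence.

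For \emph{existence}, the decisive input is that $M$, being a finite-dimensional weight module, has a finite weight support $\Pi(M)$. Using the triangular decomposition
\[
\U=\bigoplus_{\mu,\nu\in Q^+}\U^-_{-\nu}\,\U^0\,\U^+_{\mu},
\]
any $v\in\U^-_{-\nu}\U^0\U^+_{\mu}$ shifts weights by $\mu-\nu$, so the restriction of $c_{f,m}$ to this summand can be nonzero only when $\mu-\nu=\gamma'-\gamma$ for some $\gamma,\gamma'\in\Pi(M)$. In particular, only finitely many pairs $(\mu,\nu)\in Q^+\times Q^+$ contribute. My plan is to build $u$ as a finite sum $u=\sum_{\mu,\nu}u_{\mu,\nu}$, with each $u_{\mu,\nu}\in\U^-_{-\mu}\U^0\U^+_{\nu}$ accounting for one graded piece.

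On each piece I will pick a basis $\{y_i\}\subset\U^-_{-\nu}$ together with the $(\ ,\ )$-dual basis $\{\tilde y_i\}\subset\U^+_{\nu}$, and similarly $\{x_k\}\subset\U^+_{\mu}$ with dual $\{\tilde x_k\}\subset\U^-_{-\mu}$; this is possible by part~(3) of \propref{prop: Upairing}. A general element of $\U^-_{-\nu}\U^0\U^+_{\mu}$ has the form $y_i K_\tau x_k$ with $\tau\in P$. Direct computation of the action on $m=\sum_\gamma m_\gamma$ gives
\[
c_{f,m}(y_iK_\tau x_k)=\sum_{\gamma}q^{(\tau,\gamma+\mu)}\,f(y_i x_k m_\gamma),
\]
while an ansatz $u_{\mu,\nu}=\sum_{l,j,\sigma}c_{lj\sigma}\,\tilde x_l K_\sigma \tilde y_j$ combined with the formula \eqref{eq: BilinearForm} and the dual-basis relations yields
\[
\langle y_iK_\tau x_k,u_{\mu,\nu}\rangle=q^{(2\rho,\nu)}\sum_{\sigma}c_{ki\sigma}\,(q^{1/2})^{-(\tau-\nu,\sigma-\mu)}.
\]
Equating the two expressions as functions of $\tau\in P$ and invoking the linear independence of the characters $\tau\mapsto q^{(\tau,\beta)}$ for distinct $\beta$, the $\tau$-exponents force $\sigma=-\mu-2\gamma$, which uniquely determines the finitely many coefficients $c_{ki\sigma}$ from the data $\{f(y_ix_km_\gamma):\gamma\in\Pi(M)\}$.

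The main obstacle I expect is the $\U^0$-part of the pairing, which is infinite-dimensional and coupled via the exponential kernel $(q^{1/2})^{-(\tau-\nu,\sigma-\mu)}$ rather than a duality of a fixed basis. The fix, as above, is the linear-independence-of-characters argument: because $M$ contributes only finitely many weights $\gamma$, only finitely many $q^{(\tau,\cdot)}$-characters appear on the left, and a finite combination of $K_\sigma$'s on the right suffices. Finite-dimensionality of $M$ is therefore used crucially twice, to cut the $(\mu,\nu)$-sum to finite length and to keep the $U^0$-matching finite.
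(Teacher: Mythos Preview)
The paper itself does not prove this lemma; it simply cites \cite[Chapter~6.22]{Jantzen}, and your approach is essentially the standard one found there. Your treatment of uniqueness, the graded-piece reduction via dual bases for $\U^{\pm}_{\mp\mu}$, and the character-matching argument in $\U^0$ are all correct.

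There is, however, a gap in your finiteness claim. From the observation that $v\in\U^-_{-\nu}\U^0\U^+_{\mu}$ shifts weights by $\mu-\nu$ you correctly deduce that $c_{f,m}$ can be nonzero on that summand only if $\mu-\nu\in\Pi(M)-\Pi(M)$. But this bounds only the \emph{difference} $\mu-\nu$, not $\mu$ and $\nu$ individually: every diagonal pair $(\mu,\mu)$ with $\mu\in Q^+$ satisfies it, and there are infinitely many of those. Your ``In particular, only finitely many pairs $(\mu,\nu)$ contribute'' therefore does not follow from what precedes it.

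The fix is short and in the same spirit. For $x\in\U^+_{\mu}$ and $m_\gamma\in M_\gamma$ one has $x\,m_\gamma\in M_{\gamma+\mu}$, which vanishes unless $\gamma+\mu\in\Pi(M)$. Hence the $\mu$ that can contribute lie in the finite set $\Pi(M)-\Pi(M)$, and then $\nu$ is bounded via your condition on $\mu-\nu$. With this extra sentence inserted, the remainder of your argument goes through.
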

  
This leads to the following key lemma.
  
\begin{lemma}\label{lem: keylem}
Fix $\lambda\in P^+$, and let $V(\lambda)$ be the finite dimensional simple ${\rm U}$-module with highest weight $\lambda$. Then there exists a unique central element $z_{\lambda}\in Z({\rm U})$ such that
\begin{equation}\label{eq: zdef}
\langle u,z_{\lambda}\rangle={\rm Tr}(uK_{2\rho}^{-1}), \quad \forall u\in {\rm U},
\end{equation}
where ${\rm Tr}(x)$ denotes the trace of $x\in {\rm U}$ over $V(\lambda)$.
\end{lemma}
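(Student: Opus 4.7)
The plan is to construct $z_\lambda$ directly from the non-degeneracy of $\langle\ ,\ \rangle$, and then prove centrality by combining property (2) of \propref{sec3.prop of second bilinear form} with a standard $S^2$--identity for the Jimbo quantum group.

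For existence and uniqueness, pick a basis $\{v_i\}$ of $V(\lambda)$ with dual basis $\{v_i^*\}\subset V(\lambda)^*$. The trace functional on ${\rm U}$ decomposes as
\[
{\rm Tr}(uK_{2\rho}^{-1}) \;=\; \sum_i v_i^*(uK_{2\rho}^{-1}v_i) \;=\; \sum_i c_{v_i^*,\,K_{2\rho}^{-1}v_i}(u).
\]
Applying \lemref{lem: cfm} to each summand yields unique elements $z_i\in {\rm U}$ with $c_{v_i^*,\,K_{2\rho}^{-1}v_i}(u) = \langle u,z_i\rangle$, and setting $z_\lambda := \sum_i z_i$ gives \eqref{eq: zdef}. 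Uniqueness of $z_\lambda$ follows from \propref{sec3.prop of second bilinear form}(1): any two candidates differ by an element pairing trivially with all of ${\rm U}$, hence the difference is zero.

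To verify $z_\lambda\in Z({\rm U})$, note first that centrality is equivalent to ${\rm ad}(x)(z_\lambda) = \varepsilon(x)z_\lambda$ for every $x\in{\rm U}$; the nontrivial direction follows from the standard identity $xz_\lambda = \sum{\rm ad}(x_{(1)})(z_\lambda)\,x_{(2)}$, a routine consequence of coassociativity and the antipode axioms. By \propref{sec3.prop of second bilinear form}(1) again, it suffices to prove
\[
\langle u, {\rm ad}(x)(z_\lambda)\rangle \;=\; \varepsilon(x)\,\langle u, z_\lambda\rangle,\qquad \forall\,u,x\in{\rm U}.
\]
By property (2) of the same proposition, the left-hand side equals $\langle {\rm ad}(S^{-1}(x))(u),z_\lambda\rangle = {\rm Tr}({\rm ad}(S^{-1}(x))(u)\,K_{2\rho}^{-1})$. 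Using the Hopf algebra identity $\Delta(S^{-1}(x)) = \sum S^{-1}(x_{(2)})\otimes S^{-1}(x_{(1)})$, one obtains ${\rm ad}(S^{-1}(x))(u) = \sum S^{-1}(x_{(2)})\,u\,x_{(1)}$, so cyclicity of the trace gives $\sum {\rm Tr}(u\,x_{(1)}K_{2\rho}^{-1}S^{-1}(x_{(2)}))$.

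The decisive input is the Jimbo identity $S^2(y) = K_{2\rho}^{-1}yK_{2\rho}$, which rearranges to $K_{2\rho}^{-1}S^{-1}(x_{(2)}) = S(x_{(2)})K_{2\rho}^{-1}$. Substituting this in and applying the antipode axiom $\sum x_{(1)}S(x_{(2)}) = \varepsilon(x)\cdot 1$ collapses the expression to $\varepsilon(x)\,{\rm Tr}(uK_{2\rho}^{-1}) = \varepsilon(x)\langle u,z_\lambda\rangle$, as required. The main obstacle is nothing conceptual but simply keeping the conventions straight: the $S^2$--identity needs to be verified in the setting of \defref{def}, which reduces to a short check on the generators $E_i, F_i, K_\mu$ using $(2\rho,\alpha_i) = 2d_i$.
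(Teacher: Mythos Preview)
Your proof is correct and follows essentially the same route as the paper. The existence/uniqueness argument is identical (dual basis plus \lemref{lem: cfm}), and for centrality both proofs reduce to $\textnormal{ad}(x)z_\lambda=\varepsilon(x)z_\lambda$ via the ad-invariance of $\langle\ ,\ \rangle$; the paper packages your explicit $S^2$-computation as the statement that the quantum trace $\varphi\mapsto \textnormal{Tr}(\varphi\circ K_{2\rho}^{-1})$ is a ${\rm U}$-module homomorphism, but this is exactly what your cyclicity-plus-$S^2$ argument unpacks.
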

\begin{proof}
Let $m_1,m_2,\cdots,m_{r}$ be a basis of $V(\lambda)$ and $f_1,f_2,\cdots,f_r$ the dual basis of $V(\lambda)^*$, i.e., $f_i(m_j)=\delta_{ij}$. Then the trace of $uK_{2\rho}^{-1}$ over $V(\lambda)$ is equal to
$\sum\limits_{i=1}^r c_{f_i,K_{2\rho}^{-1}m_i}(u).$
By \lemref{lem: cfm}, there is a unique $v_i\in \U$ such that $\langle u,v_i\rangle=c_{f_i,K_{2\rho}^{-1}m_i}(u)$ for all $u\in\U$.  Let $z_{\lambda}=v_1+v_2+\cdots+v_r$, then we have $\langle u,z_{\lambda}\rangle=\sum\limits_{i=1}^r c_{f_i,K_{2\rho}^{-1}m_i}(u)$, which is the trace of $uK_{2\rho}^{-1}$ over $V(\lambda)$.
  
It remains to show that $z_{\lambda}$ is central in ${\rm U}$, which is equivalent to showing that $\textnormal{ad}(u)z_{\lambda}=\varepsilon(u)z_{\lambda}$ for any $ u\in {\rm U}$.  Then the linear representation $\varsigma_{\lambda}:{\rm U}\rightarrow \End(V(\lambda))$  is a homomorphism of ${\rm U}$-modules, where ${\rm U}$ acts on itself by the adjoint action, that is,  $u.v:={\rm ad}(u)v$ for any $u,v\in {\rm U}$.
The quantum trace $\textnormal{Tr}_{q}:\End(V(\lambda))\rightarrow \F$ that takes $\varphi\mapsto \textnormal{Tr}(\varphi\circ K_{2\rho}^{-1})$ is also a ${\rm U}$-module homomorphism, where $\F$ is the trivial module such that $u.a=\varepsilon(u)a$ for any $a\in \F$. Let $\theta={\rm Tr}_q\circ  \varsigma_{\lambda}$. Then by definition
\[ \theta(u)= {\rm Tr}_q\circ  \varsigma_{\lambda}(u)={\rm Tr}(uK_{2\rho}^{-1})=\langle u,z_{\lambda}\rangle, \quad \forall u\in  {\rm U}. \]
Since $\theta$ is a  ${\rm U}$-module homomorphism, we have
\[ \theta(u.v)=u.\theta(v)= \varepsilon(u)\theta(v)=\varepsilon(u)\langle v,z_{\lambda}\rangle. \]
On the other hand, using the adjoint structure of ${\rm U}$ we have
\[ \theta(u.v)={\rm Tr}_q\circ  \varsigma_{\lambda}({\rm ad}(u)v)=\langle \textnormal{ad}(u)v,z_{\lambda}\rangle=\langle v,\textnormal{ad}(S(u))z_{\lambda}\rangle.   \]
where the last equation follows from part(2) of Proposition \ref{sec3.prop of second bilinear form}. Since the bilinear form is non-degenerate we have  $\textnormal{ad}(S(u))z_{\lambda}=\varepsilon(u)z_{\lambda}$ for all $u\in {\rm U}$.  Recalling that  the antipode $S$ satisfies $\varepsilon\circ S=\varepsilon$, we obtain $\textnormal{ad}(u)z_{\lambda}=\varepsilon(u)z_{\lambda}$ for all $u\in {\rm U}$. Therefore, $z_{\lambda}\in Z({\rm U})$.
\end{proof}
\begin{lemma}
\label{sec3.gamma -rho zlambda}
Let $\lambda\in P^+$, and  $V(\lambda)$ the finite dimensional simple module of ${\rm U}$. Let $z_{\lambda}\in Z({\rm U})$ be  the central element  defined in \eqref{eq: zdef}. Then
$$\gamma_{-\rho}\circ\pi(z_{\lambda})=\sum\limits_{\eta\in\Pi(\lambda)}m_{\lambda}(\eta)K_{-2\eta},$$
where $\Pi(\lambda)$ is the set of weights of $V(\lambda)$ and $m_{\lambda}(\eta)$ denotes the dimension of  the weight space $V(\lambda)_{\eta}$.
\end{lemma}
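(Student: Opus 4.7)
The strategy is to determine $\pi(z_\lambda)$ explicitly by pairing $z_\lambda$ against the test elements $K_\mu$ for $\mu\in P$ via the defining relation \eqref{eq: zdef}, and then apply the twist $\gamma_{-\rho}$.

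First, I decompose $z_\lambda = \pi(z_\lambda) + w$ according to ${\rm U} = {\rm U}^0 \oplus \bigoplus_{\nu > 0}{\rm U}^-_{-\nu}{\rm U}^0{\rm U}^+_\nu$, with $w$ in the second summand. By \propref{prop: Upairing}(2), the pairing $(y',1)$ vanishes whenever $y'\in {\rm U}^-_{-\nu'}$ with $\nu' > 0$, so the explicit formula \eqref{eq: BilinearForm} forces $\langle K_\mu, w\rangle = 0$ for every $\mu\in P$. Hence $\langle K_\mu, z_\lambda\rangle = \langle K_\mu, \pi(z_\lambda)\rangle$, reducing the problem to identifying the ${\rm U}^0$-expansion of $\pi(z_\lambda)$.

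Writing $\pi(z_\lambda) = \sum_{\eta\in P} a_\eta K_\eta$ (a finite sum), specialising \eqref{eq: BilinearForm} to ${\rm U}^0\times {\rm U}^0$ gives $\langle K_\mu, K_\eta\rangle = (q^{1/2})^{-(\mu,\eta)}$. On the other hand, since $K_\mu K_{2\rho}^{-1}$ acts on $V(\lambda)_\eta$ as $q^{(\mu-2\rho,\eta)}$,
\[
  {\rm Tr}(K_\mu K_{2\rho}^{-1}) = \sum_{\eta\in\Pi(\lambda)} m_\lambda(\eta)\, q^{(\mu-2\rho,\eta)} = \sum_{\eta\in\Pi(\lambda)} m_\lambda(\eta)\, q^{-(2\rho,\eta)}\,(q^{1/2})^{-(\mu,-2\eta)}.
\]
Both sides of \eqref{eq: zdef} are thus linear combinations of the characters $\mu\mapsto (q^{1/2})^{-(\mu,\eta')}$ on $P$, which are linearly independent. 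Matching coefficients yields $a_{-2\eta} = m_\lambda(\eta)\, q^{-(2\rho,\eta)}$ for $\eta\in\Pi(\lambda)$ with all other $a_{\eta'}=0$, so
\[
 \pi(z_\lambda) = \sum_{\eta\in\Pi(\lambda)} m_\lambda(\eta)\, q^{-(2\rho,\eta)} K_{-2\eta}.
\]

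Finally, applying the twist \eqref{eq: gamma} with $\lambda=-\rho$ and using $\chi_{-\rho}(K_{-2\eta}) = q^{(2\rho,\eta)}$ produces the cancellation $q^{-(2\rho,\eta)}\cdot q^{(2\rho,\eta)} = 1$, which yields the claimed formula. The only technical point requiring care is the $q$-exponent bookkeeping in the character matching: it is precisely this step that forces the weights in $\pi(z_\lambda)$ to be $-2\eta$ (rather than $\eta$ or $2\eta$) and arranges the prefactor $q^{-(2\rho,\eta)}$ to cancel exactly against the $\gamma_{-\rho}$-twist; once this bookkeeping is carried out, everything else is formal.
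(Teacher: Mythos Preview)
Your proof is correct and follows essentially the same approach as the paper's: both compute $\langle K_\mu, z_\lambda\rangle$ in two ways---via the triangular decomposition of $z_\lambda$ (so that only the ${\rm U}^0$-part contributes) and via the trace formula \eqref{eq: zdef}---then invoke non-degeneracy (equivalently, linear independence of the characters $\mu\mapsto (q^{1/2})^{-(\mu,\eta')}$) to identify $\pi(z_\lambda)$ before applying $\gamma_{-\rho}$. One cosmetic slip: the direct sum you wrote is the decomposition of ${\rm U}_0$, not of all of ${\rm U}$, but since $z_\lambda$ is central and hence lies in ${\rm U}_0$ this does not affect the argument.
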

\begin{proof}
Since $z_{\lambda}$ is central and $Z({\rm U}) \subseteq \U_0={\rm U}^0\oplus\bigoplus\limits_{\nu>0}{\rm U}_{-\nu}^-{\rm U}^0{\rm U}_{\nu}^+$, we may write
\[
z_{\lambda}=z_{\lambda,0} + \sum_{\nu> 0}z_{\lambda,\nu},\quad \text{with }  z_{\lambda,0}\in\U^0, \, z_{\lambda,\nu}\in \U_{-\nu}^-{\rm U}^0{\rm U}_{\nu}^+.
\]
It follows that $\pi(z_{\lambda})= z_{\lambda,0}$.
By \eqref{eq: BilinearForm}, we have
\begin{equation}\label{eq: Kz1}
 \langle K_{\mu},z_{\lambda}\rangle=\langle K_{\mu},z_{\lambda,0}\rangle= \langle K_{\mu}, \pi(z_{\lambda})\rangle, \quad \forall \mu\in P.
\end{equation}
On the other hand, using \lemref{lem: keylem} we obtain
\begin{equation}\label{eq: Kz2}
\begin{aligned}
\langle K_{\mu},z_{\lambda}\rangle&={\rm Tr}(K_{\mu-2\rho})= \sum\limits_{\eta\in\Pi(\lambda)}m_{\lambda}(\eta)q^{(\eta,\mu-2\rho)}\\
 &=\sum\limits_{\eta\in\Pi(\lambda)}m_{\lambda}(\eta)q^{-(2\eta,\rho)}q^{(\mu,\eta)}\\
&= \sum\limits_{\eta\in\Pi(\lambda)}m_{\lambda}(\eta)q^{-(2\eta,\rho)}\langle K_{\mu}, K_{-2\eta}\rangle.
\end{aligned}
\end{equation}
Comparing \eqref{eq: Kz1} and \eqref{eq: Kz2} and using the non-degeneracy of the bilinear form, we have
\[
\gamma_{-\rho}\circ\pi(z_{\lambda})=
\sum\limits_{\eta\in\Pi(\lambda)}m_{\lambda}(\eta)K_{-2\eta}.
\]
This completes the proof.
\end{proof}

Now we are ready to prove \thmref{thm:HCiso}.

\begin{proof}[Proof of Theorem~\ref{thm:HCiso}]
We know that   $\gamma_{-\rho}\circ\pi$ is injective from \lemref{lem: HCinj}.  It remains to show that $\gamma_{-\rho}\circ\pi$ is surjective.  By \lemref{lem:basis-1}, the elements ${\rm av}(-\mu)= \sum_{\eta\in W\mu}K_{-2\eta}$ with $\mu\in P^+$ form a basis for $({\rm U}_{ev}^0)^W$, since each group orbit $W\mu$ in $ P$ contains exactly one $-\mu$ such that $\mu$ is dominant.
  
We use induction on $\mu$ to show that the basis elements ${\rm av}(-\mu)$ are  in the image of $\gamma_{-\rho}\circ\pi$. Endow $P$ with the standard partial order such that $\mu \leq \lambda$ if and only if  $\lambda-\mu$ is a  non-negative integral linear combination of positive roots. For the base case $\nu=0$, we have $\text{av}(0)=1=\gamma_{-\rho}\circ\pi(1)$.  For any $\lambda\in P^+$, we may  apply  Lemma \ref{lem: keylem}  and then obtain the  element $z_{\lambda}\in Z({\rm U})$, which by  Lemma \ref{sec3.gamma -rho zlambda} has the image
$$\gamma_{-\rho}\circ\pi(z_{\lambda})=\sum\limits_{\eta\in\Pi(\lambda)}m_{\lambda}(\eta)K_{-2\eta}=\textnormal{av}(-\lambda)+\sum\limits_{\mu<\lambda,\mu \in P^+}m_{\lambda}(\mu)\textnormal{av}(-\mu),$$
where the second equality follows from the fact that  $m_{\lambda}(\lambda)=m_{\lambda}(w\lambda)=1$ for any  $w\in W$.  The left hand side of the above equation belongs to
$\gamma_{-\rho}\circ\pi(Z({\rm U}))$. By induction  hypothesis, all $\text{av}(-\mu)$ with $\mu<\lambda$  are in the image of $\gamma_{-\rho}\circ\pi$, hence so is also $\textnormal{av}(-\lambda).$
\end{proof}
\end{appendix}

\end{document}